\newtheorem{theorem}{Theorem}
\newtheorem{corollary}[theorem]{Corollary}
\newtheorem{lemma}[theorem]{Lemma}
\newtheorem{proposition}[theorem]{Proposition}
\newenvironment{proof}[1][\noindent Proof]{\textbf{#1} }{\hfill \ \rule{0.5em}{0.5em}} \newenvironment{remark}[1][\noindent
Remark]{\noindent\textbf{#1} }
\def\C{\mathcal{C}}
\def\F{\mathcal{F}}
\def\H{\mathcal{H}}
\def\K{\mathcal{K}}
\def\Q{\mathcal{Q}}
\def\U{\mathcal{U}}
\def\n#1{\|{#1}\|}
\def\bin#1,#2{#1\choose#2}
\def\nn{\mathbb{N}}
\def\rr{\mathbb{R}}
\def\oo{\emptyset}
\title{\bf Condensing and Semi-continuous Multi-functions on Uniform Spaces}
\author{Ra\'ul Fierro$^{1,2}$\thanks{Corresponding author: Ra\'ul Fierro, Instituto de Matem\'atica, Pontificia Universidad Cat\'olica de
Valpara\'{\i}so, Casilla 4059, Valpara\'{\i}so, Chile. Email: rfierro@ucv.cl; rafipra@gmail.com}\\[1ex]
{\footnotesize $^{1}$Instituto de Matem\'atica. Pontificia Universidad Cat\'{o}lica de Valpara\'{\i}so.}\\[-0.5ex] {\footnotesize $^{2}$Instituto de Matem\'aticas. Universidad de Valpara\'{\i}so.}}
\begin{document} \maketitle

\noindent{\bf Abstract. }  Some concepts, such as non-compactness measure and condensing operators, defined on metric spaces are extended to uniform spaces. Such extensions allow us to locate, in the context of uniform spaces, some classical results existing in nonlinear analysis. An application of our results is given for operators defined on locally convex spaces. The main aim of this work is to unify some well-known results existing in complete metric and vector topological spaces.

\noindent{\bf Keywords}: uniform space, semi-continuity, Hausdorff pseudometric, non-compactness measure, condensing operator, fixed point, inward condition.

\bigskip

\noindent\textbf{MSC2010:}{ Primary 54E15, 47H04,47H10; Secondary 06A06\hfill}


\section{Introduction}
The concept of non-compactness measure was introduced by Kuratowski in \cite{Ku30} to characterize relative compact sets as those whose non-compactness measure equals zero. This measure was used by Sadowskii in \cite{Sa67} to define condensing function in a Banach space, whereby he generalized the well-known Schauder fixed point theorem \cite{Sc30}. Later, by extending the definition of non-compactness measure to locally convex spaces, in \cite{HiEtAl69} Himmelberg \emph{et al.} proved a fixed point theorem for condensing functions in locally convex spaces, generalizing even the corresponding result by Tychonov in \cite{Ty35}. Nowdays,  condensing functions and multi-functions have been defined on metric and topological vector spaces and significant results have been obtained.
On the other hand, since the Banach contraction principle \cite{Ba22} and Brouwer work \cite{Br12} were published, fixed point theory has been developed for operators defined on complete metric and topological vector spaces.  Some remarkable works on this subject, among others, are the articles of Darbo \cite{Da55}, Kakutani \cite{Ka41}, Schauder \cite{Sc30} and Tychonoff \cite{Ty35}. Even though completeness is defined for both metric and topological linear spaces, some metric spaces are not linear and some linear spaces are not metrizable. Due to the convenience of establishing results as the aforementioned in a unified way, the main aim of this paper is addressing attention in developing concepts such as non-compactness measure and condensing multi-function into a structure containing both of these class of spaces, namely the uniform structure. Uniform spaces have been studied by various authors in different contexts. The importance of these structures lies in the fact that a wide variety of classes of topological spaces are included in this category. For example, metric spaces, Hausdorff topological vector spaces and Menger (probabilistic) metric spaces are particular cases of uniform spaces. In \cite{Fa96} Fang proved that these classes are $F$-type topological spaces, which, according to Hamel \cite{HL03} (see also \cite{Ha05}), coincides with the category of uniform spaces.

We are interested in studying topological properties of multi-functions taking values in a uniform space. To this end, the Hausdorff topology on the closed and bounded subsets of the space is considered. It is well-known that this topology is generated by a family of pseudo metrics (Hausdorff pseudometrics), consequently it is uniform as well. We use these concepts to define weak lower and upper semi-continuity for multi-functions, moreover their relation with the continuity with respect to the Hausdorff topology is studied. By extending the definition of non-compactness measure to uniform spaces, we are able to characterize the relative compactness of subsets as those sets having non-compactness measure equal to zero and condensing functions and multi functions can be defined. Such extensions allow us to locate, in the context of uniform spaces, some classical results existing in nonlinear analysis. Caristi and Bishop and Phelps results have been extended, by some authors, for operators taking values in a uniform space (see for example, \cite{Br74,Ha05,HL03,Mi90,MT89}). We revisit some of these works in order to expand the range of situations where new versions of these results can be applied.

Including this introduction, the paper is divided in six sections.  In
Section 2, some preliminary definitions, notations and facts are stated in order to prove results in subsequence
sections. Section 3 is devoted to study relations among the different type of semi-continuities and the continuity of multi-functions according to the Hausdorff topology. The non-compactness and condensing concepts for uniform spaces are introducing in Section 4, along with some of the main results of this paper. In Section 5, extended versions of Caristi's theorem are presented and finally, we devote Section 6 for an application of some our results to multi-functions defined on locally convex spaces.

\section{Preliminaries}
In all of this work, $(X,\U)$ stands for a uniform space. For each $x\in X$, $A\subseteq X$ and $U\in \U$, we denote $U[x]=\{y\in X:(x,y)\in U\}$ and $U[A]=\bigcup_{x\in A}U[x]$. We consider $X$ endowed with the topology induced by $\U$.
A subset $B$  of $X$ is said to be bounded, if there exist $x\in X$ and $U\in \mathcal{U}$ such that $B\subseteq U[x]$. We denote by $\mathcal{B}(X)$ the family of all bounded subsets of $X$ and by $\C(X)$ the family of all closed and nonempty subsets of $X$. Let $\mathcal{CB}(X)=\mathcal{C}(X)\cap \mathcal{B}(X)$. On $\mathcal{CB}(X)$ a family $\H=\{H_U\}_{U\in\U}$ is defined as follows:
$$
H_U=\{(A,B)\in\mathcal{CB}(X)\times\mathcal{CB}(X):A\subseteq U[B]\mbox{ and }B\subseteq U[A]\}.
$$
It is well known, see \cite{CV77} for instance, that  $\H$ is a fundamental system of entourages for a uniformity on $\mathcal{CB}(X)$. The topology on $\mathcal{CB}(X)$ induced by $\H$ is referred to as the $\H$-topology  and continuity with respect  to this topology is referred to as $\H$-continuity. Let $\K(X)$ denote the family of all nonempty compact subsets of $X$. Since $\K(X)\subseteq \mathcal{CB}(X)$, we consider $\K(X)$ endowed with the induced $\H$-topology. In \cite{SR69/70} it is proved that $\mathcal{K}(X)$ is $\H$-complete, if and only if, $(X,\U)$ so is.

It is well-known that the uniformity $\U$ is generated by a family of pseudo metrics $\{d_\lambda\}_{\lambda\in\Lambda}$ on $X$. As usual, for each $\lambda\in\Lambda$, $a\in A$, $\epsilon>0$ and $A\in\mathcal{CB}(X)$, we denote $\mathrm{B}_\lambda(a,\epsilon)=\{x\in X:d_\lambda(a,x)<\epsilon\}$ and $\mathrm{B}_\lambda(A,\epsilon)=\bigcup_{x\in A}\mathrm{B}_\lambda(x,\epsilon)$.
It is assumed that the family $\{d_\lambda\}_{\lambda\in\Lambda}$ is saturated, which means that the following condition holds:
\begin{description}
  \item[(S)] For each $\lambda,\mu\in\Lambda$, there exists $\nu\in\Lambda$ such that $\max\{d_\lambda(x,y),d_\mu(x,y)\}\leq d_\nu(x,y)$, for all $x,y\in X$.
\end{description}
This condition implies that a set $G\subseteq X$ is open, whenever for each $a\in G$ there exist $\lambda\in\Lambda$ and $\epsilon>0$ such that $\mathrm{B}_\lambda(a,\epsilon)\subseteq G$. As showed in Section \S1.2, Chapter IX in \cite{Bo66b}, condition (S) is not a restriction on the topology of $X$. That is, when the family of pseudo metrics does not satisfy condition (S), we can find such a family satisfying this condition and generating the same topology.

It is easy to see (see Theorem II-12 in \cite{CV77}, for instance) that the $\H$-topology on $\mathcal{CB}(X)$ is generated by the family of pseudo metrics $\{H^\lambda\}_{\lambda\in\Lambda}$  defined by
$$
H^\lambda(A,B)=\max\left\{\sup_{x\in A}d_\lambda(x,B),\sup_{y\in B}d_\lambda(y,A)\right\},
$$
where, as usual for $A\in\mathcal{CB}(X)$, $a\in A$ and $\lambda\in\Lambda$,  $d_\lambda(a,A)=\inf_{x\in A}d_\lambda(a,x)$.  In the sequel, these notations are maintained.

\begin{remark}
Observe that, for all $A,B\in\mathcal{CB}(X)$ and $\lambda\in\Lambda$,
$$
H^\lambda(A,B)=\inf\{\epsilon>0:A\subseteq \mathrm{B}_\lambda(B,\epsilon),B\subseteq \mathrm{B}_\lambda(A,\epsilon)\}.
$$
\end{remark}

Let $E$ be a set and $T:E\to \C(X)$  be a multi-function. For any $B\subset X$, the inverse  image of $B$ under $T$ is defined by $T^{-1}(B)=\{x\in E:Tx\cap B\neq\oo\}$ and for each $A\subseteq E$ the image of $A$ by $T$ is defined as $T(A)=\bigcup_{x\in A}Tx$. Next, suppose $E$ is a
topological space. The multi-function  $T$ is said to be lower (respectively,  upper) semi-continuous, if for any open subset $G$ (respectively, closed subset
$F$) of $X$, $T^{-1}(G)$ is an open (respectively, $T^{-1}(F)$ is a closed) subset of  $E$.

\section{Semi-continuity and $\H$-continuity}

\begin{proposition}\label{p1} Let $E$ be a topological space and $T:E\to\mathcal{CB}(X)$ an $\mathcal{H}$-continuous multi-function.
Then, $T$ is lower semi-continuous.
\end{proposition}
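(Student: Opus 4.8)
The plan is to verify lower semi-continuity directly from the definition: given an arbitrary open set $G\subseteq X$, I will show that every point of $T^{-1}(G)$ has an open neighborhood contained in $T^{-1}(G)$. So fix an open $G$ and a point $x_0\in T^{-1}(G)$, meaning $Tx_0\cap G\neq\oo$, and choose a witness $a\in Tx_0\cap G$. Since $G$ is open and the topology of $X$ is the one described in Section 2 (induced by the saturated family $\{d_\lambda\}_{\lambda\in\Lambda}$), condition (S) guarantees some $\lambda\in\Lambda$ and $\epsilon>0$ with $\mathrm{B}_\lambda(a,\epsilon)\subseteq G$. The whole argument is built around this single index $\lambda$.

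The key observation is that $\H$-continuity lets me control $Tx$ relative to $Tx_0$ through the one pseudometric $H^\lambda$. Consider the $\H$-open set
$$
W=\{B\in\mathcal{CB}(X):H^\lambda(Tx_0,B)<\epsilon\},
$$
which is a subbasic neighborhood of $Tx_0$ for the $\H$-topology. Viewing $T$ as a single-valued map $E\to\mathcal{CB}(X)$, its $\H$-continuity ensures that $V=\{x\in E:Tx\in W\}$ is open in $E$, and clearly $x_0\in V$ since $H^\lambda(Tx_0,Tx_0)=0$. It then remains to check the inclusion $V\subseteq T^{-1}(G)$. For $x\in V$, because $a\in Tx_0$ the definition of $H^\lambda$ gives
$$
d_\lambda(a,Tx)\le\sup_{y\in Tx_0}d_\lambda(y,Tx)\le H^\lambda(Tx_0,Tx)<\epsilon .
$$
Since $d_\lambda(a,Tx)=\inf_{z\in Tx}d_\lambda(a,z)<\epsilon$, there is some $z\in Tx$ with $d_\lambda(a,z)<\epsilon$, i.e. $z\in\mathrm{B}_\lambda(a,\epsilon)\subseteq G$; hence $z\in Tx\cap G$ and $x\in T^{-1}(G)$. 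This shows $x_0\in V\subseteq T^{-1}(G)$, and as $x_0$ was arbitrary, $T^{-1}(G)$ is open, which is precisely lower semi-continuity.

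I do not expect a genuine obstacle here; the proof is essentially bookkeeping translating the $\H$-metric condition into a point-wise statement. The two small points to get right are, first, that only the one-sided term $\sup_{y\in Tx_0}d_\lambda(y,Tx)$ of $H^\lambda$ is used (controlling how $Tx_0$ is approximated by $Tx$), so lower semi-continuity needs only ``half'' of the Hausdorff estimate; and second, that the strict inequality $d_\lambda(a,Tx)<\epsilon$ on the infimum is what produces an actual witness $z\in Tx$ lying in the ball, rather than merely an approximating sequence. Both are immediate from the definitions recalled in Section 2.
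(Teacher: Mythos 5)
Your proof is correct and follows essentially the same route as the paper's: both fix a witness point of $Tx_0\cap G$, use $\H$-continuity to place $Tx_0$ inside a uniform neighborhood of $Tx$, and then extract a point of $Tx$ close to the witness that still lies in $G$, using only the one-sided half of the Hausdorff condition. The only difference is that you phrase this with the pseudometrics $d_\lambda$ and $H^\lambda$ while the paper uses entourages $U$ and $H_U$, which is a cosmetic distinction given the equivalence recorded in Section 2.
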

\begin{proof}
Let $a\in E$ and $G$ be an open subset of $X$ such that $Ta\cap G\neq\oo$. Hence,
there exist $x_0\in Ta$ and $U\in\U$ such that $Ta\cap U^{-1}[x_0]\neq\oo$ and $U^{-1}[x_0]\subseteq
G$. Since $T$ is continuous at $a$, with respect to the $\H$-topology, there exists $V_a$ neighborhood of $a$ such that $(Tu,Ta)\in H_U$, if $u\in V_a$. Hence for each $u\in V_a$, we have $x_0\in Ta\subseteq U[Tu]$ and consequently $\oo\neq Tu\cap U^{-1}[x_0]\subseteq Tu\cap G$. This concludes the proof.
\end{proof}

Let $\K(X)$ denote the family of all compact nonempty subsets of $X$. Since $\K(X)\subseteq \mathcal{CB}(X)$, we consider $\K(X)$ endowed with the
induced $\H$-topology.

\begin{theorem}\label{t2}
Let $E$ be a topological space and $T:E\to\K(X)$ a multi-function.  Then, $T$ is lower  and upper semi-continuous, if and only if, $T$
is continuous with respect to the induced $\H$-topology on $\K(X)$.
\end{theorem}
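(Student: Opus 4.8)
The plan is to prove the two implications separately, and in both directions the compactness of the values $Ta$ is the decisive ingredient. One half of the first implication is already available: by Proposition \ref{p1}, $\H$-continuity forces lower semi-continuity, so for the direction ``$\H$-continuous $\Rightarrow$ semi-continuous'' only upper semi-continuity needs to be established.

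To get upper semi-continuity I would use the characterization that $T$ is upper semi-continuous exactly when $\{x\in E:Tx\subseteq G\}$ is open for every open $G\subseteq X$ (this set is the complement in $E$ of the closed set $T^{-1}(X\setminus G)$). Fixing such a $G$ and a point $a$ with $Ta\subseteq G$, the first task is to produce an entourage $U\in\U$ with $U[Ta]\subseteq G$. This is the standard separation lemma for a compact set inside an open set: for each $x\in Ta$ pick a symmetric $U_x\in\U$ with $U_x\circ U_x[x]\subseteq G$, extract a finite subcover of $Ta$ from $\{U_x[x]\}$ by compactness, and intersect the finitely many $U_x$ to obtain $U$. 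Then $\H$-continuity at $a$ supplies a neighborhood $V_a$ of $a$ with $(Tu,Ta)\in H_U$ for $u\in V_a$; in particular $Tu\subseteq U[Ta]\subseteq G$, as required.

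For the converse, assume $T$ is both lower and upper semi-continuous, fix $a\in E$, and fix a basic entourage $H_U$ of $\mathcal{CB}(X)$; since the $H_U$ form a fundamental system it suffices to find a neighborhood $V_a$ of $a$ with $Tu\subseteq U[Ta]$ and $Ta\subseteq U[Tu]$ for all $u\in V_a$. I would choose an open symmetric $W\in\U$ with $W\circ W\subseteq U$ and treat the two inclusions independently. The inclusion $Tu\subseteq U[Ta]$ comes from upper semi-continuity: $W[Ta]$ is an open set containing the compact set $Ta$, so $\{u:Tu\subseteq W[Ta]\}$ is a neighborhood of $a$, and on it $Tu\subseteq W[Ta]\subseteq U[Ta]$.

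The inclusion $Ta\subseteq U[Tu]$ is where compactness re-enters, and it is the step I expect to be the main obstacle. For each $x\in Ta$ the set $W[x]$ is open and meets $Ta$, so lower semi-continuity makes $O_x=\{u:Tu\cap W[x]\neq\oo\}$ an open neighborhood of $a$; the difficulty is that a single neighborhood must serve all $x\in Ta$ simultaneously. Compactness resolves this: the open cover $\{W[x]\}_{x\in Ta}$ of $Ta$ has a finite subcover indexed by $x_1,\dots,x_n\in Ta$, and I would let $V_a$ be the intersection of $O_{x_1},\dots,O_{x_n}$ with the neighborhood from the upper semi-continuity step. For $u\in V_a$ choose $y_i\in Tu\cap W[x_i]$; given any $x\in Ta$, it lies in some $W[x_i]$, and then symmetry of $W$ together with $W\circ W\subseteq U$ yields $(y_i,x)\in U$, hence $x\in U[y_i]\subseteq U[Tu]$. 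Thus $Ta\subseteq U[Tu]$, both inclusions hold, and $T$ is $\H$-continuous at $a$.
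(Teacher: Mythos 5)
Your proposal is correct and follows essentially the same route as the paper: lower semi-continuity via Proposition \ref{p1}, upper semi-continuity from the compact-in-open separation property $U[Ta]\subseteq G$ (which the paper cites from Bourbaki and you prove inline), and for the converse the same two-inclusion argument where upper semi-continuity gives $Tu\subseteq U[Ta]$ and lower semi-continuity plus a finite subcover of the compact set $Ta$ gives $Ta\subseteq U[Tu]$. Your use of a symmetric $W$ with $W\circ W\subseteq U$ in place of the paper's $V$ with $V\circ V^{-1}\subseteq U$ is an inessential variation.
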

\begin{proof}
Let us assume $T$ is a lower and upper semi-continuous
multi-function. Let $a\in E$, $U\in\U$ and $V\in\U$ such that $V\circ V^{-1}\subseteq U$. We have
$Ta\subseteq V[Ta]$ and due to $T$ is upper semi-continuous, there exists $V_a$
neighborhood of $a$ such that, for each $u\in V_a$, $Tu\subseteq V[Ta]\subseteq (V\circ V^{-1})[Ta]$. On the other hand, $Ta$ is compact and hence there exist $x_1,\dots,x_r\in Ta$ such that $Ta\subseteq G$, where $G=V[x_1]\cup\dots\cup V[x_r]$. Since for all $i\in\{1,\dots,r\}$, $Ta\cap V[x_i]\neq\oo$, the lower semi-continuity of $T$ implies there exists
$W_a$ neighborhood of $a$ such that, for each $u\in W_a$, $Tu\cap V[x_i]\neq\oo$, for all $i\in\{1,\dots,r\}$. That is, for
each $u\in W_a$, there exist $y_1,\dots,y_r\in Tu$ such that for all $i\in\{1,\dots,r\}$, $y_i\in V[x_i]$. Hence
$$
Ta\subseteq\bigcup_{i=1}^rV[x_i]\subseteq\bigcup_{i=1}^r(V\circ V^{-1})[y_i]\subseteq\bigcup_{y\in Tu}U[y].
$$
We have proved that for each $u\in V_a\cap W_a$, $Tu\subseteq U[Ta]$ and $Ta\subseteq U[Tu]$. Thus, $T$ is continuous at $a$ according with the $\H$-topology.

Next, we assume $T$ is continuous according to the $\H$-topology on $\K(X)$. From  Proposition \ref{p1}, $T$ is lower semi-continuous. Let us prove that
$T$ is upper semi-continuous. Let $a\in X$ and $G$ be an open subset of $X$ such that $Ta\subseteq G$.  Since $X$ is  a uniform space and $Ta$ is
compact (see Proposition 4, Section \S4.3, Chapter II in \cite{Bo66}, for instance), there exists $U\in\U$ such that $U[Ta]\subseteq G$. By assumption, there exists $V_a$ neighborhood of $a$ such that $Tu\subseteq U[Ta]$ (and $Ta\subseteq U[Tu]$) if $u\in
V_a$. Hence, for all $u\in V_a$, $ T(u)\subseteq G$, i.e., $T$ is upper semi-continuous and therefore the proof is
complete.
\end{proof}

Let $D$ be a subset of $X$ and $T: D\to\C(X)$ a multi-function. We say $T$ is weakly lower (respectively, upper) semi-continuous, if  for each $\lambda\in\Lambda$ and $\alpha\geq0$, the set $\{x\in D:d_\lambda(x,T(x))<\alpha\}$ (respectively, $\{x\in D:d_\lambda(x,T(x))>\alpha\}$) is open, and we say $T$ is weakly continuous, if  $T$ is both weakly lower and weakly upper semi-continuous.

\begin{theorem}\label{t3}
Let $D$ be a subset of $X$ and $T: D\to\C(X)$ a  multi-function.
\begin{description}
\item[(\ref{t3}.1)]If $T$ is
lower semi-continuous, then $T$ is weakly lower semi-continuous.
\item[(\ref{t3}.2)] If $T$ is upper semi-continuous, then $T$ is weakly upper
semi-continuous.
\end{description}
\end{theorem}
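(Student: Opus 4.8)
The plan is to treat the two assertions separately but through the same mechanism: in each case I fix $\lambda\in\Lambda$ and $\alpha\geq0$, take a point $a$ in the set whose openness is at issue, and exhibit an explicit basic neighbourhood of $a$ contained in that set. The difficulty is that $x\mapsto d_\lambda(x,T(x))$ couples a moving base point with a moving set, so the argument must control both at once: the semi-continuity hypothesis on $T$ governs the image $T(x)$, while a ball $\mathrm{B}_\lambda(a,\delta)$ absorbs the displacement of the base point via the triangle inequality for $d_\lambda$. Here I would use, without further comment, that each such ball is open in the topology of $X$ (a consequence of condition (S)) and that $z\mapsto d_\lambda(a,z)$ is continuous, the latter from the estimate $|d_\lambda(a,z)-d_\lambda(a,z')|\leq d_\lambda(z,z')$.

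For (\ref{t3}.1), if $\alpha=0$ the set $\{x\in D:d_\lambda(x,T(x))<0\}$ is empty, hence open, so I assume $\alpha>0$ and fix $a$ with $d_\lambda(a,T(a))<\alpha$. Since $d_\lambda(a,T(a))=\inf_{y\in T(a)}d_\lambda(a,y)$, I choose $y_0\in T(a)$ with $d_\lambda(a,y_0)<\alpha$ and then pick $\epsilon,\delta>0$ small enough that $d_\lambda(a,y_0)+\epsilon+\delta<\alpha$. The open set $G=\mathrm{B}_\lambda(y_0,\epsilon)$ meets $T(a)$ (it contains $y_0$), so lower semi-continuity supplies a neighbourhood $V$ of $a$ with $T(x)\cap G\neq\oo$ for $x\in V$. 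For $x\in V\cap\mathrm{B}_\lambda(a,\delta)$, picking $z\in T(x)\cap G$ gives $d_\lambda(x,T(x))\leq d_\lambda(x,z)\leq d_\lambda(x,a)+d_\lambda(a,y_0)+d_\lambda(y_0,z)<\delta+d_\lambda(a,y_0)+\epsilon<\alpha$, so this open neighbourhood lies in the set.

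For (\ref{t3}.2), I would first reformulate upper semi-continuity: taking complements of $T^{-1}(F)$ over closed $F\subseteq X$ shows that $\{x\in D:T(x)\subseteq G\}$ is open for every open $G\subseteq X$. Fix $a$ with $\gamma:=d_\lambda(a,T(a))>\alpha$ and choose $\beta$ with $\alpha<\beta<\gamma$. Every $y\in T(a)$ satisfies $d_\lambda(a,y)\geq\gamma>\beta$, so $T(a)\subseteq G$ where $G=\{z\in X:d_\lambda(a,z)>\beta\}$ is open. Upper semi-continuity then yields a neighbourhood $V$ of $a$ with $T(x)\subseteq G$ for $x\in V$, whence $d_\lambda(a,T(x))\geq\beta$. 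For $x\in V\cap\mathrm{B}_\lambda(a,\beta-\alpha)$ the inequality $d_\lambda(x,z)\geq d_\lambda(a,z)-d_\lambda(a,x)$, taken over $z\in T(x)$ and passed to the infimum, gives $d_\lambda(x,T(x))\geq\beta-d_\lambda(a,x)>\alpha$, so again the neighbourhood is contained in the set.

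The triangle-inequality estimates are routine; the one point demanding care, which I regard as the main obstacle, is to select the auxiliary open set in $X$ (the ball about $y_0$, respectively the superlevel set of $d_\lambda(a,\cdot)$) \emph{independently of $x$}, so that the semi-continuity hypothesis can be applied at the single point $a$, and then to decouple the variation of the base point from that of the image so that one ball $\mathrm{B}_\lambda(a,\delta)$ suffices to close the bound.
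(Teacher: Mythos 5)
Your proof is correct and follows the paper's own argument in all essentials: part (\ref{t3}.1) is the paper's proof with a slightly different split of the slack $\alpha-d_\lambda(a,Ta)$ (the paper uses three equal thirds $\epsilon/3$, you use $\epsilon+\delta$ chosen to fit under $\alpha$), and part (\ref{t3}.2) uses the same mechanism of applying upper semi-continuity to an open set containing $Ta$ and then closing the estimate inside the ball $\mathrm{B}_\lambda(a,\beta-\alpha)$. The only deviation is cosmetic: where the paper takes the auxiliary open set to be a tube $\{z:d_\lambda(z,Ta)<(\gamma-\beta)/2\}$ around $Ta$, which requires a second constant $\gamma$ and an extra triangle-inequality step to deduce $d_\lambda(a,Tx)\geq(\gamma+\beta)/2$, you take the superlevel set $G=\{z\in X:d_\lambda(a,z)>\beta\}$ directly, which makes the conclusion $d_\lambda(a,Tx)\geq\beta$ immediate and shortens the bookkeeping without changing the idea.
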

\begin{proof}
Let $\lambda\in \Lambda$ and $\alpha \geq0$. Suppose $T$ is lower semi-continuous and
define $A=\{x\in D:d_\lambda(x,Tx)<\alpha \}$. In order to prove that $A$ is an open set, suppose $A\neq \emptyset $ and choose $a\in A$. Let
$\epsilon =\alpha -d_\lambda(a,Ta)$ and $y\in Ta$ such that $d_\lambda(a,y)<\epsilon /3+d_\lambda(a,Ta)$. Since $T$ is lower semi-continuous,
there exists a neighborhood  $V'_a$ of $a$ such that $Tu\cap B_\lambda(y,\epsilon /3)\neq \emptyset $, for all  $u\in V'_a$. Let
$V_a=V'_a\cap B_\lambda(a,\epsilon/3)$, $u\in V_a$ and $ b_{u}\in Tu\cap B_\lambda(y,\epsilon /3)$. We have $d_\lambda(u,b_{u})\leq
d_\lambda(u,a)+d_\lambda(a,y)+d_\lambda(y,b_{u})<\alpha $ and consequently, $d_\lambda(u,Tu)<\alpha$. This proves that $A$ is an open set and
therefore, (\ref{t3}.1) holds.

Next we define $A=\{x\in D:d_\lambda(x,Tx)>\alpha\}$. Let $a\in A$ and
choose $\beta, \gamma\in\mathbb{R}$ such that $\gamma>\beta>\alpha$ and $d_\lambda(a,Ta)>\gamma$. Let $G=\{x\in
D:d_\lambda(x,Ta)<(\gamma-\beta)/2\}$. Since $Ta\subseteq G$, $G$ is open and $T$ is upper semi-continuous, there exists $V'_a$ neighborhood of
$a$ such that for each $x\in V'_a$, $Tx\subseteq G$. This implies that for each $x\in V'_a$ and each $y\in Tx$,
$d_\lambda(y,Ta)<(\gamma-\beta)/2$. Hence, $$ \gamma<d_\lambda(a,Ta)\leq d_\lambda(a,y)+d_\lambda(y,Ta)<d_\lambda(a,y)+(\gamma-\beta)/2. $$
Thus, $d_\lambda(a,y)>(\gamma+\beta)/2$ and consequently, $d_\lambda(a,Tx)\geq(\gamma+\beta)/2$. Let $V_a=V'_a\cap B_\lambda(a,\beta-\alpha)$ and
note that for each $x\in V_a$, $$ \beta<d_\lambda(a,Tx)\leq d_\lambda(a,x)+d_\lambda(x,Tx)<\beta-\alpha+d_\lambda(x,Tx). $$ This proves that
$V_a\subseteq A$ and therefore, $A$ is an open set, which concludes the proof.
\end{proof}

\begin{remark}
As shown in \cite{FMO11}, even though $X$ is a metric space, there exist weakly continuous  multi-functions, which are not
lower or upper semi-continuous multi-functions.
\end{remark}
\begin{theorem}\label{t4} Let $E$ be a subset of $X$ and $T:E\to\mathcal{CB}(X)$ be a $\H$-continuous multi-function. Then, $T$ is
weakly continuous.
\end{theorem}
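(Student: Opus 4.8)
The plan is to reduce the assertion to the continuity of a single real-valued function. For a fixed $\lambda\in\Lambda$, set $f_\lambda(x)=d_\lambda(x,Tx)$ for $x\in E$. By definition, $T$ is weakly lower semi-continuous precisely when each set $\{x\in E:f_\lambda(x)<\alpha\}$ is open, and weakly upper semi-continuous precisely when each set $\{x\in E:f_\lambda(x)>\alpha\}$ is open. Both families of sets are preimages under $f_\lambda$ of open half-lines, so it suffices to show that $f_\lambda:E\to[0,\infty)$ is continuous for every $\lambda\in\Lambda$. (Weak lower semi-continuity could also be extracted directly from Proposition \ref{p1} together with (\ref{t3}.1), but proving continuity of $f_\lambda$ settles both halves simultaneously and is the cleaner route.)

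Next I would record two elementary estimates relating $d_\lambda$ and the Hausdorff pseudometric $H^\lambda$. First, for any fixed $C\in\mathcal{CB}(X)$ the map $x\mapsto d_\lambda(x,C)$ is $1$-Lipschitz for $d_\lambda$, that is $|d_\lambda(x,C)-d_\lambda(y,C)|\leq d_\lambda(x,y)$; this is immediate from the triangle inequality applied inside the infimum defining $d_\lambda(\cdot,C)$. Second, for all $A,B\in\mathcal{CB}(X)$ and every $z\in X$ one has $|d_\lambda(z,A)-d_\lambda(z,B)|\leq H^\lambda(A,B)$. This is the key inequality: for $a\in A$ the bound $\sup_{x\in A}d_\lambda(x,B)\leq H^\lambda(A,B)$, read off from the definition of $H^\lambda$ (equivalently from the Remark), produces points of $B$ arbitrarily close to $a$, whence $d_\lambda(z,B)\leq d_\lambda(z,a)+H^\lambda(A,B)$; taking the infimum over $a\in A$ gives $d_\lambda(z,B)\leq d_\lambda(z,A)+H^\lambda(A,B)$, and the symmetric argument yields the claim. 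Combining the first estimate with $C=Tu$ and the second with $z=a$ and $(A,B)=(Tu,Ta)$ produces
$$
|f_\lambda(u)-f_\lambda(a)|\leq d_\lambda(u,a)+H^\lambda(Tu,Ta),\qquad a,u\in E.
$$

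Finally I would invoke the hypothesis. Since the $\H$-topology on $\mathcal{CB}(X)$ is generated by the family $\{H^\lambda\}_{\lambda\in\Lambda}$, $\H$-continuity of $T$ at $a$ is exactly continuity with respect to each pseudometric $H^\lambda$; hence, given $\epsilon>0$, there is a neighborhood $V'_a$ of $a$ in $E$ with $H^\lambda(Tu,Ta)<\epsilon/2$ for all $u\in V'_a$. Condition (S) guarantees that $\mathrm{B}_\lambda(a,\epsilon/2)$ is open, so $V_a=V'_a\cap \mathrm{B}_\lambda(a,\epsilon/2)$ is a neighborhood of $a$ on which the displayed inequality gives $|f_\lambda(u)-f_\lambda(a)|<\epsilon$. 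Thus $f_\lambda$ is continuous at every $a\in E$ and for every $\lambda\in\Lambda$, which concludes the argument. The one nontrivial point to get right is the second estimate, $|d_\lambda(z,A)-d_\lambda(z,B)|\leq H^\lambda(A,B)$; everything else is bookkeeping with the triangle inequality and with the fact that the $\H$-topology is the one generated by the $H^\lambda$.
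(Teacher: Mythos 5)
Your proof is correct and follows essentially the same route as the paper: both arguments reduce the claim to continuity of $x\mapsto d_\lambda(x,Tx)$ via the estimate $|d_\lambda(u,Tu)-d_\lambda(v,Tv)|\leq d_\lambda(u,v)+H^\lambda(Tu,Tv)$, which the paper derives in one triangle-inequality chain using a near-optimal point $y_\epsilon\in Tv$ and you derive by splitting it into the two standard Lipschitz estimates. The only difference is that you spell out the neighborhood bookkeeping that the paper compresses into ``the weak continuity of $T$ is directly obtained.''
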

\begin{proof}
Let $\lambda\in\Lambda$, $u,v\in E$, $\epsilon>0$ and $y_\epsilon\in T(v)$ such that
$d_\lambda(v,y_\epsilon)<d_\lambda(v,T(v))+\epsilon$. Hence,
$$
\begin{array}{ccl} d_\lambda(u,T(u)) & \leq & d_\lambda(u,v)+d_\lambda(v,y_\epsilon)+
d_\lambda(y_\epsilon,T(u))\\ &< & d_\lambda(u,v)+d_\lambda(v,T(v))+\epsilon+H_\lambda(T(v),T(u)).\\
\end{array}
$$
Consequently,
$$
|d_\lambda(u,T(u))-d_\lambda(v,T(v))|\leq d_\lambda(u,v)+H_\lambda(T(v),T(u))
$$
and therefore, the weak continuity of $T$ is directly obtained.
\end{proof}

\section{Non-compactness measure}
In accordance with the uniformity generating the topology of $X$, a subset $D$ of $X$ is precompact, if for each  $\lambda\in\Lambda$ and $\epsilon>0$, there exist $x_1,\dots,x_r\in X$ such that $D\subseteq \mathrm{B}_\lambda(x_1,\epsilon)\cup\dots \cup\mathrm{B}_\lambda(x_r,\epsilon)$. Moreover, a filter $\F$ on $X$ is  a Cauchy filter (c.f. \cite{Bo66}), if for each $\lambda\in\Lambda$ and $\epsilon>0$, there exist $A\in\F$  such that $A\times A\subseteq \mathrm{U}(\lambda,\epsilon)$, where $\mathrm{U}(\lambda,\epsilon)=\{(x,y)\in X\times X:d_\lambda(x,y)<\epsilon\}$.

A non-compactness measure $\alpha:\mathcal{B}(X)\to[0,\infty[$ is defined as
$\alpha(A)=\inf\{\epsilon>0:\C(\epsilon)\ni A\}$, where for $\epsilon>0$, $\C(\epsilon)$ is the family of all $A\in\mathcal{B}(X)$ such that  for
each $\lambda\in\Lambda$, there exist $x_1,\dots, x_r\in X$ such that $A\subseteq \mathrm{B}_\lambda(x_i,\epsilon/2)\cup\cdots\cup \mathrm{B}_\lambda(x_r,\epsilon/2)$. From Theorem 3, Section \S 4.2, Chapter II in \cite{Bo66}, $A\in \mathcal{B}(X)$ is precompact, if and only if, $\alpha(A)=0$.

It is easy to see the following three properties hold:
\begin{description}
\item[a)]$\alpha(A)\leq\alpha(B)$ whenever $A\subseteq B$,\quad($B\in
\mathcal{B}(X)$).
\item[b)] $\alpha(A\cup B)=\max\{\alpha(A),\alpha(B)\}$,\quad ($A,B\in \mathcal{B}(X)$).
\item[c)] $\alpha(\overline{A})=\alpha(A)$,\quad ($A\in \mathcal{B}(X)$).
\item[d)] $\alpha\left(\bigcap_{\lambda\in\Lambda}\mathrm{\mathrm{B}}_\lambda(A,\epsilon)\right)\leq\alpha(A)+\epsilon$,\quad ($A\in \mathcal{B}(X),\epsilon>0$).
\end{description}

Theorem \ref{t5} below extends a classical result (Theorem 1') by Kuratowski in \cite{Ku30}.

\begin{theorem}\label{t5}
Suppose $X$ is complete. Let $\mathcal{B}$ be a filter base on $X$ such that $\mathcal{B}\subseteq \mathcal{CB}(X)$, $E=\bigcap_{B\in\mathcal{B}}B$,
and suppose $\inf\{\alpha(B):B\in\mathcal{B}\}=0$. Then, $E$ is compact and nonempty, and $\mathcal{B}$ converges to $E$ in the $\H$-topology, i.e.\ for each $\lambda\in\Lambda$  and $\epsilon>0$, there exists $B\in\mathcal{B}$ such that $H^\lambda(B,E)<\epsilon$.
\end{theorem}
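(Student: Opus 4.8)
The plan is to treat the three assertions---nonemptiness, compactness, and $\H$-convergence---in turn, with the nonemptiness argument carrying essentially all of the weight. To manufacture a point of $E$, I would extend the filter generated by $\mathcal{B}$ to an ultrafilter $\mathcal{G}$ on $X$ (possible by the ultrafilter lemma) and show that $\mathcal{G}$ is a Cauchy filter. Fix $\lambda\in\Lambda$ and $\epsilon>0$; since $\inf\{\alpha(B):B\in\mathcal{B}\}=0$, I can choose $B\in\mathcal{B}$ with $\alpha(B)<\epsilon$, so that $B\subseteq\mathrm{B}_\lambda(x_1,\epsilon/2)\cup\dots\cup\mathrm{B}_\lambda(x_r,\epsilon/2)$ for suitable $x_1,\dots,x_r$. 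As $B\in\mathcal{G}$ and $\mathcal{G}$ is an ultrafilter, one of the finitely many balls, say $A=\mathrm{B}_\lambda(x_i,\epsilon/2)$, lies in $\mathcal{G}$, and the triangle inequality gives $A\times A\subseteq\mathrm{U}(\lambda,\epsilon)$. Hence $\mathcal{G}$ is Cauchy, and since $X$ is complete it converges to some $x^\ast\in X$.

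The limit $x^\ast$ lies in $E$: for every $B\in\mathcal{B}$ we have $B\in\mathcal{G}$, and for every neighbourhood $V$ of $x^\ast$ we have $V\in\mathcal{G}$, so $V\cap B\neq\oo$; thus $x^\ast\in\overline{B}=B$ because $B$ is closed, and therefore $x^\ast\in\bigcap_{B\in\mathcal{B}}B=E$, proving $E\neq\oo$. Compactness is then immediate: $E$ is closed (an intersection of closed sets) in the complete space $X$, hence complete, while $E\subseteq B$ together with the monotonicity of $\alpha$ gives $\alpha(E)\leq\inf_{B}\alpha(B)=0$, so $E$ is precompact; a complete precompact subset of a uniform space is compact.

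For the $\H$-convergence, fix $\lambda\in\Lambda$ and $\epsilon>0$. Because $E\subseteq B$ for each $B\in\mathcal{B}$, the term $\sup_{y\in E}d_\lambda(y,B)$ vanishes, so $H^\lambda(B,E)=\sup_{x\in B}d_\lambda(x,E)$ and it suffices to find $B$ with this supremum smaller than $\epsilon$. I would argue by contradiction: if $H^\lambda(B,E)\geq\epsilon$ for every $B\in\mathcal{B}$, set $C_B=\{x\in B:d_\lambda(x,E)\geq\epsilon/2\}$. Each $C_B$ is nonempty (a supremum at least $\epsilon$ yields a point with $d_\lambda(\cdot,E)>\epsilon/2$), closed, and bounded, and the family $\{C_B\}_{B\in\mathcal{B}}$ is again a filter base, with $\inf_B\alpha(C_B)=0$ by monotonicity. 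The nonemptiness result just proved then supplies a point $z\in\bigcap_B C_B=E\cap\{x:d_\lambda(x,E)\geq\epsilon/2\}$; but $z\in E$ forces $d_\lambda(z,E)=0$, contradicting $d_\lambda(z,E)\geq\epsilon/2$. Hence some $B\in\mathcal{B}$ satisfies $H^\lambda(B,E)<\epsilon$.

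The main obstacle is the first step: extracting an actual point of $E$ from the sole hypothesis $\inf_B\alpha(B)=0$. The difficulty is that a small value of $\alpha(B)$ only guarantees a finite cover of $B$ by small balls, not a single small set inside the filter; passing to an ultrafilter is precisely what lets one select one of these balls within $\mathcal{G}$ and thereby verify the Cauchy condition, after which completeness finishes the job. Once nonemptiness is in hand as a lemma, both compactness and $\H$-convergence follow formally, the latter by re-applying nonemptiness to the truncated filter base $\{C_B\}$.
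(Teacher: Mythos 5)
Your proposal is correct and follows essentially the same route as the paper's own proof: nonemptiness via extending $\mathcal{B}$ to an ultrafilter, showing it is Cauchy by selecting one of the finitely many small balls, and using completeness plus closedness of the $B$'s; compactness from $\alpha(E)=0$; and $\H$-convergence by contradiction, re-applying the nonemptiness argument to the truncated filter base (the paper's $\widetilde{\mathcal{B}}=\{B\cap L(\epsilon,\lambda)\}$ is your $\{C_B\}$ up to the choice of threshold). No substantive differences.
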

\begin{proof}
Since $E$ is closed, $X$ is complete and $\alpha(E)=0$, we have $E$ is compact.  Let $\F$ be  the filter generated by
$\mathcal{B}$, i.e., $\F=\{A\subseteq X:\exists B\in\mathcal{B}, B\subseteq A\}$. In order to prove $E$ is nonempty, let us denote by $\F^*$ a
ultrafilter such that $\F\subseteq\F^*$. By assumption, for each $\epsilon>0$ there exists $B\in\mathcal{B}$ such that for each $\lambda\in\Lambda$, there exist $x_1,\dots,x_{r}\in X$ satisfying
$B\subseteq\mathrm{B}_\lambda(x_1,\epsilon/2)\cup\cdots\cup\mathrm{B}_\lambda(x_r,\epsilon/2)$.
Since $B\in\F^*$,  one of these balls $\mathrm{B}_\lambda(x_1,\epsilon/2),\dots,\mathrm{B}_\lambda(x_r,\epsilon/2)$ belongs to $\F^*$ (see Corollary in Section \S 6.4, Chapter I in
\cite{Bo66}, for instance). Thus, $\F^*$ is a Cauchy filter and consequently it converges to some point $x^*\in X$. But,
$\{x^*\}=\bigcap_{F\in\F^*}F\subseteq \bigcap_{F\in\F}F= \bigcap_{B\in\mathcal{B}}B$ and therefore $E$ is non empty.

Next, we prove the convergence of $\mathcal{B}$ to $E$ in the $\H$-topology. Suppose there exist  $\lambda\in\Lambda$ and $\epsilon>0$ such that for any $B\in\mathcal{B}$, $H^\lambda(B,E)>\epsilon$. Let $L(\epsilon,\lambda)=\{x\in X:d_\lambda(x,E)\geq\epsilon\}$. Since $H^\lambda(B,E)=\sup_{x\in
B}d_\lambda(x,E)$, there exists $x\in B$ such that $d_\lambda(x,E)>\epsilon$. Thus, for any $B\in\mathcal{B}$, $B\cap
L(\epsilon,\lambda)\ne\emptyset$. This fact implies, $\widetilde{\mathcal{B}}=\{B\cap L(\epsilon,\lambda):B\in\mathcal{B}\}$ is a filter base on the complete uniform space
$L(\epsilon,\lambda)$. Moreover $\inf\{\alpha(B):B\in\widetilde{\mathcal{B}}\}=0$. Hence, by applying to  $\widetilde{\mathcal{B}}$ what we have shown at the first paragraph to $\mathcal{B}$,  we have
$\oo\ne\bigcap_{B\in\widetilde{\mathcal{B}}}B=\bigcap_{B\in\mathcal{B}}B\cap L(\epsilon,\lambda)=E\cap L(\epsilon,\lambda)$, which is a contradiction. Therefore, the proof is complete.
\end{proof}

\begin{remark} Note that whether $\mathcal{B}$ is countable, in Theorem \ref{t5}, in order to
$\bigcap_{B\in\mathcal{B}}B$ is compact and nonempty,  it suffices that $X$ is sequentially complete.
\end{remark}

Let $T:D\subseteq X\to 2^X$ be a multi-function. A point $x\in D$ is said to be  a fixed point of $T$, whenever $x\in Tx$. We say that $T$ is
condensing, if $T$ is continuous and for each $A\subseteq D$ such that $A\in \mathcal{B}(X)$ and $\alpha(A)>0$, we have $T(A)\in \mathcal{B}(X)$ and $\alpha(T(A))<\alpha(A)$.

\begin{proposition}\label{p6}
Suppose $X$ is complete. Let $D$ be a closed subset of $X$, $T:D\subseteq X\to \mathcal{CB}(X)$ be a weakly  upper semi-continuous
multi-function and for each $\lambda\in\Lambda$ and $\eta>0$,  $B_{\lambda,\eta}=\{x\in D:d_\lambda(x,Tx)\leq\eta\}$. Suppose for each $\lambda\in\Lambda$ and $\eta>0$,  $B_{\lambda,\eta}\neq\oo$ and $\inf\{\alpha(B_{\lambda,\eta}):\lambda\in\Lambda,\eta>0\}=0$. Then, $T$ has a fixed point.
\end{proposition}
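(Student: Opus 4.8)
The plan is to obtain a fixed point by applying Theorem \ref{t5} to the family of approximate fixed-point sets $\{B_{\lambda,\eta}:\lambda\in\Lambda,\eta>0\}$. The guiding observation is that a point $x\in D$ with $d_\lambda(x,Tx)=0$ for every $\lambda\in\Lambda$ is automatically a fixed point: since the topology of $X$ is generated by $\{d_\lambda\}_{\lambda\in\Lambda}$, the closure of $Tx$ is exactly $\{z:d_\lambda(z,Tx)=0\text{ for all }\lambda\}$, and $Tx$ is closed because $Tx\in\mathcal{CB}(X)$, so the condition forces $x\in Tx$. Thus it suffices to produce a single point lying in $B_{\lambda,\eta}$ for all $\lambda$ and all $\eta>0$, which is precisely the intersection $E=\bigcap_{\lambda,\eta}B_{\lambda,\eta}$ furnished by Theorem \ref{t5}.

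To set this up I would first check that each $B_{\lambda,\eta}$ lies in $\mathcal{CB}(X)$. Boundedness is part of the standing hypotheses, since $\alpha(B_{\lambda,\eta})$ is assumed meaningful and $\alpha$ is defined only on $\mathcal{B}(X)$. For closedness, weak upper semi-continuity of $T$ (applied with the value $\eta>0$) makes $\{x\in D:d_\lambda(x,Tx)>\eta\}$ open, so its complement $B_{\lambda,\eta}$ is closed in $D$; as $D$ is closed in $X$, each $B_{\lambda,\eta}$ is closed in $X$. Next I would verify that $\{B_{\lambda,\eta}\}$ is a filter base. Nonemptiness of every member is assumed. For downward directedness, given $B_{\lambda,\eta}$ and $B_{\mu,\eta'}$, condition (S) supplies $\nu\in\Lambda$ with $\max\{d_\lambda,d_\mu\}\leq d_\nu$; then $d_\nu(x,Tx)\geq d_\lambda(x,Tx)$ and $d_\nu(x,Tx)\geq d_\mu(x,Tx)$, so with $\eta''=\min\{\eta,\eta'\}$ one gets the inclusion $B_{\nu,\eta''}\subseteq B_{\lambda,\eta}\cap B_{\mu,\eta'}$, and this refinement is nonempty by hypothesis.

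With these facts in hand the conclusion is immediate: since $\inf\{\alpha(B_{\lambda,\eta}):\lambda\in\Lambda,\eta>0\}=0$ and $X$ is complete, Theorem \ref{t5} gives that $E=\bigcap_{\lambda,\eta}B_{\lambda,\eta}$ is compact and nonempty. Any $x\in E\subseteq D$ satisfies $d_\lambda(x,Tx)\leq\eta$ for every $\lambda$ and every $\eta>0$, hence $d_\lambda(x,Tx)=0$ for all $\lambda\in\Lambda$, and therefore $x\in Tx$ by the remark above. The main obstacle is the filter-base verification, and within it specifically the directedness step: this is exactly where saturation (S) is indispensable, since without a dominating pseudometric the sets $B_{\lambda,\eta}$ indexed by different $\lambda$ need not possess a common refinement inside the family, and Theorem \ref{t5} could not be invoked. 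The remaining steps are formal once closedness and the filter-base structure are established.
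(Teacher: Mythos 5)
Your proof is correct and follows essentially the same route as the paper's: both verify that $\{B_{\lambda,\eta}\}$ is a filter base of closed, bounded sets (closedness from weak upper semi-continuity, directedness from condition (S)), apply Theorem \ref{t5} to get a nonempty intersection, and conclude via the closedness of the images $Tx$. You have merely spelled out the details that the paper's one-paragraph proof leaves implicit.
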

\begin{proof}
Since $\{d_\lambda\}_{\lambda\in\Lambda}$ satisfies condition (S) and $T$ is a weakly upper semi-continuous multi-function,
$\mathcal{B}=\{B_{\lambda,\eta}:\lambda\in\Lambda,\eta>0\}$ is a filter base of closed and  bounded subsets of the complete space $D$. Hence, Theorem \ref{t5} implies $\bigcap_{\lambda\in\Lambda,\eta>0}B_{\lambda,\eta}\neq\oo$ and therefore, due to $T$ has closed images, $T$ has a fixed point.
\end{proof}

Given a filter base $\mathcal{A}$ on a subset $D$ of $X$, we denote by $\mathcal{A}'$ the  set of all its accumulation points, i.e.\
$x\in\mathcal{A}'$, if for any neighborhood $V$ of $x$ and $A\in\mathcal{A}$, $V\cap A\ne\emptyset$. An extension of an old result due to Furi Vignoli
\cite{FV69} can be stated as follows.

\begin{proposition}\label{p7}
Let $D$ be a complete subset of $X$, $T:D\subseteq X\to \mathcal{CB}(X)$ a condensing  weakly upper semi-continuous multi-function and $\mathcal{A}\subseteq\mathcal{B}(X)$ a filter base on $D$. Suppose $\inf\{\sup_{x\in A,\lambda\in\Lambda}d_\lambda(x,Tx):A\in
\mathcal{A}\}=0$. Then, $\mathcal{A}'\neq\emptyset$ and each point in $\mathcal{A}'$ is a fixed point of $T$.
\end{proposition}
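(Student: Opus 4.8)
The plan is to produce one accumulation point of $\mathcal{A}$ that is a fixed point, and to verify separately that \emph{every} accumulation point is fixed. Write $s(A)=\sup_{x\in A,\lambda\in\Lambda}d_\lambda(x,Tx)$, so the hypothesis reads $\inf_{A\in\mathcal{A}}s(A)=0$, and set $F=\{x\in D:d_\lambda(x,Tx)=0\text{ for all }\lambda\in\Lambda\}$; since each $Tx$ is closed, $x\in F$ exactly when $x\in Tx$, i.e.\ $F$ is the fixed point set. Fixing a member $A_0\in\mathcal{A}$ and replacing $\mathcal{A}$ by $\{A\cap A_0:A\in\mathcal{A}\}$, I may assume every $A\in\mathcal{A}$ lies in the bounded set $A_0$; this does not increase any $s(A)$ and only shrinks $\bigcap_{A}\overline{A}$, so it is harmless for the nonemptiness claim (the fixed point property of accumulation points, proved last, does not use this reduction).

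First I would show $\overline{A}\cap F\neq\emptyset$ for every $A\in\mathcal{A}$. Given $A$, the filter base property together with $\inf s=0$ lets me choose for each $n$ a member $C_n\in\mathcal{A}$ with $C_n\subseteq A$ and $s(C_n)<1/n$, and then a point $y_n\in C_n\subseteq A$, so that $\sup_\lambda d_\lambda(y_n,Ty_n)<1/n$. Put $S=\{y_n:n\in\nn\}$. The crux is the Furi--Vignoli ``finite removal'' trick: discarding finitely many points leaves $\alpha$ unchanged by properties (a),(b), so $\alpha(S)=\alpha(\{y_n:n\ge N\})$ for every $N$; and since each $y_n$ with $n\ge N$ lies within $d_\lambda$-distance $<1/N$ of $Ty_n\subseteq T(\{y_m:m\ge N\})$ in every pseudometric, property (d) gives $\alpha(S)\le\alpha\big(T(\{y_m:m\ge N\})\big)+1/N\le\alpha(T(S))+1/N$. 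Letting $N\to\infty$ yields $\alpha(S)\le\alpha(T(S))$, so $\alpha(S)>0$ would force $\alpha(T(S))<\alpha(S)$ by the condensing hypothesis, a contradiction; hence $\alpha(S)=0$, $\overline{S}$ is precompact, and as $D$ is complete it is compact. Thus $(y_n)$ has a cluster point $y^*\in\overline{S}\subseteq\overline{A}$. Weak upper semi-continuity finishes the step: if $d_\lambda(y^*,Ty^*)>\beta>0$ for some $\lambda$, then $\{x:d_\lambda(x,Tx)>\beta\}$ is an open neighbourhood of $y^*$ containing $y_n$ for infinitely many $n$, contradicting $d_\lambda(y_n,Ty_n)<1/n$. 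Hence $y^*\in F$, and since $y^*\in\overline{A}\subseteq\overline{A_0}$ we obtain $y^*\in\overline{A}\cap F_0$, where $F_0=F\cap\overline{A_0}$.

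Next I would show $F_0$ is compact and conclude by a finite intersection argument. For $x\in F_0$ we have $x\in Tx\subseteq T(F_0)$, so $F_0\subseteq T(F_0)\subseteq\overline{T(F_0)}$ and therefore $\alpha(F_0)\le\alpha(\overline{T(F_0)})=\alpha(T(F_0))$ by properties (a),(c); as $F_0$ is bounded, the condensing hypothesis excludes $\alpha(F_0)>0$, whence $\alpha(F_0)=0$. Moreover $F_0$ is closed, since each $\{x:d_\lambda(x,Tx)=0\}$ is an intersection over $\eta>0$ of the closed sets $\{x:d_\lambda(x,Tx)\le\eta\}$ furnished by weak upper semi-continuity; with $D$ complete this makes $F_0$ compact. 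By the previous paragraph $\{\overline{A}\cap F_0:A\in\mathcal{A}\}$ is a family of nonempty closed subsets of $F_0$ with the finite intersection property (for $A_1,\dots,A_k$ pick $C\subseteq\bigcap_iA_i$ in $\mathcal{A}$ and note $\emptyset\neq\overline{C}\cap F_0\subseteq\bigcap_i\overline{A_i}\cap F_0$). Compactness of $F_0$ then gives $\bigcap_{A\in\mathcal{A}}(\overline{A}\cap F_0)\neq\emptyset$, and this set is contained in $\bigcap_{A\in\mathcal{A}}\overline{A}=\mathcal{A}'$, so $\mathcal{A}'\neq\emptyset$.

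It remains to check that \emph{every} $x^*\in\mathcal{A}'$ is a fixed point, by the same local argument: for each $\lambda$, if $d_\lambda(x^*,Tx^*)>\beta>0$ then the weakly-upper-semicontinuous open set $\{x:d_\lambda(x,Tx)>\beta\}$ is a neighbourhood of $x^*$ which, because $x^*\in\overline{A}$ for every $A\in\mathcal{A}$, meets each $A$ with $s(A)<\beta$ — impossible. Hence $d_\lambda(x^*,Tx^*)=0$ for all $\lambda$ and $x^*\in Tx^*$. The step I expect to be most delicate is the nonemptiness of $\mathcal{A}'$, not the fixed point property: because the condensing condition is only the strict inequality $\alpha(T(A))<\alpha(A)$ with no uniform contraction factor, one cannot iterate the estimate $\alpha(A)\le\alpha(T(A))+s(A)$ to drive $\inf_{A}\alpha(A)$ to $0$, and $\mathcal{A}$ is not assumed countably based, so a single minimizing sequence need not be cofinal. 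The finite-removal trick applied to the auxiliary sequence $(y_n)$ built inside an \emph{arbitrary} $A$, together with compactness of $F_0$ and the filter base's finite intersection property, is what circumvents this; one could instead try to prove $\inf_{A\in\mathcal{A}}\alpha(A)=0$ and invoke Theorem \ref{t5} directly, but that route appears to require the very same single-set estimate.
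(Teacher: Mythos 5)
Your proof is correct, but at the decisive step --- the nonemptiness of $\mathcal{A}'$ --- it takes a genuinely different route from the paper, and the difference matters: the paper's own argument is exactly the ``single-set estimate'' you warned against in your closing paragraph. The paper picks, for a given $\epsilon>0$, a member $A\in\mathcal{A}$ with $\sup_{x\in A,\lambda\in\Lambda}d_\lambda(x,Tx)<\epsilon$, derives $\alpha(A)\leq\alpha(T(A))+\epsilon$ from property d), then asserts ``thus $\alpha(A)\leq\alpha(T(A))$'', concludes $\alpha(A)=0$ by the condensing property, and obtains $\mathcal{A}'=\bigcap_{B\in\mathcal{A}}\overline{B}\neq\emptyset$ from the compactness of this single $\overline{A}$; its second half (every accumulation point is fixed, via weak upper semi-continuity and closedness of the images) is essentially your final paragraph. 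The paper's suppression of $\epsilon$ is not legitimate: $A$ is chosen after $\epsilon$, so the inequality cannot be iterated for that fixed $A$; worse, the intermediate conclusion is false in general --- take $X=D=\ell^2$, $Tx=\{x_0\}$ constant, and $\mathcal{A}$ the filter base of closed balls centered at $x_0$: all hypotheses of the proposition hold, yet no member of $\mathcal{A}$ is precompact (while $\mathcal{A}'=\{x_0\}$, as the proposition asserts). Your Furi--Vignoli tail argument is precisely what makes the error term removable: discarding finitely many $y_n$ leaves $\alpha(S)$ unchanged, so $\alpha(S)\leq\alpha(T(S))+1/N$ holds for one \emph{fixed} set $S$ and every $N$, and the condensing property can then bite. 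The price you pay is the extra machinery --- compactness of the fixed-point set $F_0$ and the finite-intersection argument inside it --- needed to pass from ``each $\overline{A}$ contains a fixed point'' to $\mathcal{A}'\neq\emptyset$, a step the paper gets for free from its unjustified claim that some $\overline{A}$ is itself compact. In short, your proof is longer but sound; the paper's, as written, has a gap at exactly the point you anticipated, and your argument is the natural repair of it.
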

\begin{proof}
Let $\epsilon>0$. From assumption, there exists $A\in\mathcal{A}$  such that $\sup_{x\in A,\lambda\in\Lambda}d_\lambda(x,Tx)<\epsilon$. This fact
implies  $A\subseteq \bigcap_{\lambda\in\Lambda}B_\lambda(T(A),\epsilon)$ and hence, property d) implies that $ \alpha(A)\leq\alpha(T(A))+\epsilon. $ Thus,
$\alpha(A)\leq\alpha(T(A))$ and since $T$ is condensing, we have $\alpha(A)=0$. The completeness of $D$ implies that $A$ is compact and consequently $\mathcal{A}'=\bigcap_{A\in\mathcal{A}}\overline{A}\neq\emptyset$. Note that from the assumption, for each  $\eta>0$, there exists
$A_{\eta}\in\mathcal{A}$ such that $A_{\eta}\subseteq \bigcap_{\lambda}B_{\lambda,\eta}$, where $B_{\lambda,\eta}$ is defined as in Proposition \ref{p6}. Since $T$ is weakly upper semi-continuous, $\overline{A}_{\eta}\subseteq \bigcap_{\lambda}B_{\lambda,\eta}$ and therefore, $\mathcal{A}'\subseteq \bigcap_{\lambda\in\Lambda,\eta>0}B_{\lambda,\eta}$, concluding the proof due to $T$ has closed images.
\end{proof}

We state below an extension of a known result in metric spaces.

\begin{theorem}\label{t8}
Let $D\in \mathcal{CB}(X)$ and $T:D\to 2^D$ be a condensing multi-function. Then, there exists a compact subset $C$ of $D$ such that $T(C)\subseteq C$.
\end{theorem}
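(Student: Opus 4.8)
The plan is to mimic the classical Sadovskii--Darbo construction of a minimal invariant set, and then to use the non-compactness measure $\alpha$ to force that set to be precompact. Fix a point $x_0\in D$ (possible since $D\neq\oo$), and let $\mathcal{F}$ be the family of all closed subsets $A\subseteq D$ satisfying $x_0\in A$ and $T(A)\subseteq A$. This family is nonempty, because $D$ itself is closed (as $D\in\mathcal{CB}(X)\subseteq\mathcal{C}(X)$), contains $x_0$, and satisfies $T(D)\subseteq D$ since $T:D\to 2^D$. Set
$$
C=\bigcap_{A\in\mathcal{F}}A.
$$
Then $C$ is closed and $x_0\in C$; moreover, for every $A\in\mathcal{F}$ monotonicity of the image gives $T(C)\subseteq T(A)\subseteq A$, hence $T(C)\subseteq\bigcap_{A\in\mathcal{F}}A=C$. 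In particular $C\in\mathcal{F}$, so $C$ is the smallest member of $\mathcal{F}$.

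The key step is to identify $C$ with the closure of $\{x_0\}\cup T(C)$. Put $C'=\overline{\{x_0\}\cup T(C)}$. Since $x_0\in C$, $T(C)\subseteq C$, and $C$ is closed, we have $C'\subseteq C$; consequently $T(C')\subseteq T(C)\subseteq\{x_0\}\cup T(C)\subseteq C'$, and $x_0\in C'$, so $C'\in\mathcal{F}$. Minimality of $C$ then yields $C\subseteq C'$, whence
$$
C=\overline{\{x_0\}\cup T(C)}.
$$

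Next I would compute $\alpha(C)$ using this identity together with properties b) and c): since a singleton is compact, $\alpha(\{x_0\})=0$, so
$$
\alpha(C)=\alpha(\{x_0\}\cup T(C))=\max\{\alpha(\{x_0\}),\alpha(T(C))\}=\alpha(T(C)).
$$
As $C\subseteq D$ is bounded, $C\in\mathcal{B}(X)$. If $\alpha(C)>0$, the condensing property would force $\alpha(T(C))<\alpha(C)$, contradicting the displayed equality. Hence $\alpha(C)=0$, so by the characterization recalled in Section~4 the set $C$ is precompact; being closed it is compact (taking the ambient completeness into account), and it satisfies $T(C)\subseteq C$, as desired.

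The main obstacle I anticipate is the self-referential character of the middle step: one must first produce the invariant set $C$ as an intersection, and only afterwards verify that $C$ coincides with $\overline{\{x_0\}\cup T(C)}$, for it is precisely this identity that couples $\alpha(C)$ to $\alpha(T(C))$ and thereby activates the condensing hypothesis. A secondary point worth checking is the passage from $\alpha(C)=0$ to genuine compactness: precompactness together with closedness yields compactness only when the set in question is complete, so the argument implicitly relies on completeness of $X$ (or of $D$); if this is not a standing assumption, it should be recorded in the hypotheses of the theorem.
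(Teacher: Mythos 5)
Your proof is correct and is essentially the paper's own argument: the paper forms $\Sigma=\{K\in\mathcal{CB}(X):x_0\in K\subseteq D \mbox{ and } T(K)\subseteq K\}$, sets $B=\bigcap_{K\in\Sigma}K$ and $C=\overline{\{x_0\}\cup T(B)}$, shows $B=C$, and then uses properties b) and c) exactly as you do to conclude $\alpha(C)=\alpha(T(C))$ and hence $\alpha(C)=0$ from the condensing hypothesis. Your closing caveat is also well taken: the paper's proof likewise stops at $\alpha(C)=0$, which only gives precompactness, so passing to genuine compactness does rely on completeness (of $X$, hence of the closed set $C$), an assumption the theorem's statement leaves implicit.
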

\begin{proof}
Let $x_0\in D$ and $\Sigma=\{K\in\mathcal{CB}(X):x_0\in K\subseteq D\mbox{ and } T(K)\subseteq K\}$. Due to $D\in\Sigma$, we have $\Sigma\neq\oo$. Let $B=\bigcap_{K\in\Sigma}K$ and $C=\overline{\{x_0\}\cup T(B)}$. We have $T(B)\subseteq\bigcap_{K\in\Sigma}T(K)\subseteq B$ and $x_0\in B$. Moreover, since $B$ is closed, $C\subseteq B$. Thus $T(C)\subseteq T(B)\subseteq  C$, $C\in\Sigma$ and $B=C$. From properties b) and c), we obtain $\alpha(C)=\alpha(T(B))=\alpha(T(C))$  and due to $T$ is condensing, we have $\alpha(C)=0$ and the proof is complete.
\end{proof}

Let $\F=\{k_\lambda\}_{\lambda\in\Lambda}$ be a family of constants such that for each $\lambda\in\Lambda$, $0\leq
k_\lambda<1$, $D$ a subset of $X$ and $T:D\to \mathcal{CB}(X)$. We say $T$ is an $\F$-contractive multi-function, if for any $x,y\in D$ and $\lambda\in\Lambda$, $H^\lambda(Tx,Ty)\leq k_\lambda d_\lambda(x,y)$. Let $k$ be a constant such that $0<k<1$.  We say $T$ is a $k$-set contraction, if $T$ is continuous and for each $A\subseteq D$ such that $A\in \mathcal{B}(X)$, we have $T(A)\in \mathcal{B}(X)$ and $\alpha(T(A))\leq k\alpha(A)$. Of course, every $k$-set contraction is condensing.

\begin{theorem}\label{t9}
Let $\F=\{k_\lambda\}_{\lambda\in\Lambda}$ be a family of nonnegative constants such that  $k=\sup_{\lambda\in\Lambda}k_\lambda<1$ and $T:X\to \mathcal{K}(X)$ be an $\F$-contractive multi-function. Then, $T$ is a $k$-set contraction.
\end{theorem}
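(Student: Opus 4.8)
The statement has two parts: that an $\F$-contractive map $T:X\to\K(X)$ is continuous, and that it satisfies the set-contraction estimate $\alpha(T(A))\le k\,\alpha(A)$ together with $T(A)\in\mathcal{B}(X)$ for every $A\in\mathcal{B}(X)$. Continuity is the easy half and I would dispatch it first: the $\F$-contractive inequality $H^\lambda(Tu,Ta)\le k_\lambda\,d_\lambda(u,a)\le k\,d_\lambda(u,a)$ shows that, for each fixed $\lambda$ and each $\varepsilon>0$, the neighbourhood $\mathrm{B}_\lambda(a,\varepsilon/(k+1))$ of $a$ is carried by $T$ into the $H^\lambda$-ball of radius $\varepsilon$ about $Ta$. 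Since the $\H$-topology on $\mathcal{CB}(X)$ is generated by the family $\{H^\lambda\}_{\lambda\in\Lambda}$, this is exactly $\H$-continuity of $T$ at $a$, so $T$ is continuous.

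For the substantive part, fix a nonempty $A\in\mathcal{B}(X)$ (the empty case being trivial) and an arbitrary $\varepsilon>\alpha(A)$, so that $A\in\C(\varepsilon)$. The plan is a three-step covering argument. First, for each $\lambda\in\Lambda$ I choose centres $x_1,\dots,x_r\in X$ with $A\subseteq\bigcup_{i=1}^r\mathrm{B}_\lambda(x_i,\varepsilon/2)$; it is essential here that the $x_i$ are allowed to range over all of $X$, which is legitimate precisely because $T$ is defined on the whole space. Second, I push this cover forward through $T$: if $x\in A$ lies in $\mathrm{B}_\lambda(x_i,\varepsilon/2)$ then $H^\lambda(Tx,Tx_i)\le k_\lambda\,d_\lambda(x,x_i)\le k\varepsilon/2$, and the description of $H^\lambda$ recorded in the Remark of Section 2 gives $Tx\subseteq\mathrm{B}_\lambda(Tx_i,k\varepsilon/2+\delta)$ for every $\delta>0$. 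Taking the union over $x\in A$ yields
$$
T(A)\subseteq\bigcup_{i=1}^r\mathrm{B}_\lambda\!\left(Tx_i,\tfrac{k\varepsilon}{2}+\delta\right).
$$

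The third step uses compactness of the values in an essential way. Each $Tx_i$ is compact, hence precompact, so it is covered by finitely many $\lambda$-balls of radius $\delta$; enlarging radii by the triangle inequality turns the previous display into a finite cover of $T(A)$ by $\lambda$-balls of radius $k\varepsilon/2+2\delta$. Because $\lambda$ was arbitrary while $\varepsilon$ and $\delta$ were fixed, this simultaneously shows $T(A)\in\mathcal{B}(X)$ and $T(A)\in\C(k\varepsilon+4\delta)$, whence $\alpha(T(A))\le k\varepsilon+4\delta$. Letting $\delta\downarrow0$ and then $\varepsilon\downarrow\alpha(A)$ gives $\alpha(T(A))\le k\,\alpha(A)$, as required; since $k<1$, $T$ is a $k$-set contraction.

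The only conceptual obstacle is the third step, and it is where the hypothesis $T:X\to\K(X)$ (rather than merely $T:X\to\mathcal{CB}(X)$) is used in an irreducible way: if the values $Tx_i$ were only closed and bounded, then $\alpha(Tx_i)$ could be positive and the re-covering would inject an additive error that does not vanish in the limit, destroying the bound. The remaining care is purely bookkeeping — keeping one $\varepsilon$ uniform across all $\lambda$ so that the final cover certifies membership in a single $\C(\cdot)$, and taking the two limits in the order $\delta\downarrow0$ then $\varepsilon\downarrow\alpha(A)$.
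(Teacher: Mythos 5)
Your proof is correct and follows essentially the same route as the paper's: cover $A$ by $\lambda$-balls of radius slightly above $\alpha(A)/2$, push the cover forward with the $\F$-contractive inequality and the characterization of $H^\lambda$, re-cover using precompactness of the compact images $Tx_i$, and let the auxiliary parameters tend to zero. The only differences are cosmetic improvements: you verify $\H$-continuity and boundedness of $T(A)$ explicitly (the paper leaves both implicit) and work with general bounded $A$ rather than closed bounded $A$.
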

\begin{proof}
Let $A\in\mathcal{CB}(X)$ and $\epsilon=\alpha(A)$. For each $\lambda\in\Lambda$ and $\eta>0$, there exist $a_1,\dots,a_r\in X$ such that $A\subseteq A^\lambda_1\cup\cdots\cup A^\lambda_r$, where $A^\lambda_i=\mathrm{B}_\lambda(a_i,\eta+\epsilon/2)$, for $i\in\{1,\dots,r\}$.   Hence, for each $x\in A^\lambda_i$, we have
$H^\lambda(Tx,Ta_i)\leq k_\lambda d_\lambda(x,a_i)<k_\lambda(\eta+\epsilon/2)< \eta+k\epsilon/2$. Thus, $Tx\subseteq \mathrm{B}_\lambda(Ta_i,\eta+k\epsilon/2)$ and $T(A^\lambda_i)\subseteq\mathrm{B}_\lambda(Ta_i,\eta+k\epsilon/2)$. Since $Ta_i$ is compact, there exist $b_{i1},\dots,b_{is}\in Ta_i$ such that $Ta_i\subseteq \mathrm{B}_\lambda(b_{i1},\eta)\cup\cdots\cup\mathrm{B}_\lambda(b_{is},\eta)$. Hence, $T(A^\lambda_i)\subseteq\mathrm{B}_\lambda(b_{i1},2\eta+k\epsilon/2)\cup\cdots\cup\mathrm{B}_\lambda(b_{is},2\eta+k\epsilon/2)$ and consequently, $\alpha(T(A^\lambda_i))\leq 2\eta +k\epsilon$.  But $T(A)\subseteq T(A^\lambda_1)\cup\cdots\cup T(A^\lambda_r)$ along with property b) imply $\alpha(T(A))\leq 2\eta+k\epsilon$, for all $\eta>0$. Accordingly, $\alpha(T(A))\leq k\alpha(A)$ and  the proof is complete.
\end{proof}

\begin{remark}
Let $B\in \mathcal{CB}(X)$ be a non compact set and $T:X\to\mathcal{CB}(X)$ be defined as $Tx=B$, for all $x\in X$. We have, for any family $\F$ of nonnegative constants, $T$ is an $\F$-contractive multi-function, which is not condensing. This example shows that it is not a strong condition, in Theorem \ref{t9}, to assume that $T$ has compact images.
\end{remark}

A well-known result by Nadler \cite{Na69} is generalized as follows.

\begin{theorem}\label{t15}
Suppose $X$ is sequentially complete and let $T:X\to \mathcal{CB}(X)$ be an  $\F$-contractive multi-function with $\F=\{k_\lambda;\lambda\in\Lambda\}$ a family of nonnegative constants such that
$\sup_{\lambda\in\Lambda}k_\lambda<1$. Then, $T$ has a fixed point.
\end{theorem}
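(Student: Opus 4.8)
The plan is to imitate Nadler's original iteration, with the single metric replaced by the saturated family $\{d_\lambda\}_{\lambda\in\Lambda}$. Write $k=\sup_{\lambda\in\Lambda}k_\lambda<1$ and fix $q$ with $k<q<1$. Starting from an arbitrary $x_0$ and a point $x_1\in Tx_0$, I would build inductively a sequence $(x_n)$ with $x_{n+1}\in Tx_n$ and, for every $\lambda\in\Lambda$,
\[
d_\lambda(x_n,x_{n+1})\le q^{\,n}\,d_\lambda(x_0,x_1).
\]
The inductive step rests on the contractive inequality: since $x_n\in Tx_{n-1}$, for each $\lambda$
\[
d_\lambda(x_n,Tx_n)\le\sup_{z\in Tx_{n-1}}d_\lambda(z,Tx_n)\le H^\lambda(Tx_{n-1},Tx_n)\le k_\lambda\,d_\lambda(x_{n-1},x_n)\le k\,d_\lambda(x_{n-1},x_n),
\]
so the inductive hypothesis gives $d_\lambda(x_n,Tx_n)\le k\,q^{\,n-1}d_\lambda(x_0,x_1)<q^{\,n}d_\lambda(x_0,x_1)$; the strict gap $k<q$ leaves room to locate a point of $Tx_n$ within $d_\lambda$-radius $q^{\,n}d_\lambda(x_0,x_1)$ of $x_n$.

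The delicate point, and the step I expect to be the real obstacle, is that this selection must be made \emph{simultaneously for all} $\lambda\in\Lambda$ by a single point $x_{n+1}\in Tx_n$. For one fixed $\lambda$ the inequality $d_\lambda(x_n,Tx_n)<q^{\,n}d_\lambda(x_0,x_1)$ furnishes an admissible point, but the near-minimizers for different $\lambda$ need not coincide, so a priori the intersection $Tx_n\cap\bigcap_{\lambda}\mathrm{B}_\lambda\!\bigl(x_n,q^{\,n}d_\lambda(x_0,x_1)\bigr)$ could be empty. To force a single selection I would lean on the saturation hypothesis (S): it lets me replace any finite subfamily $d_{\lambda_1},\dots,d_{\lambda_m}$ by a dominating $d_\nu$, and I would try to route the approximate selection through these dominating pseudometrics, together with the closedness of $Tx_n$, so that the chosen $x_{n+1}$ controls every $d_\lambda$ at once. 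This is precisely where the passage from a single metric (Nadler) to a general uniformity is nontrivial; it is where I anticipate spending most of the effort, and it is the part of the argument that must be carried out with real care rather than by analogy.

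Granting the construction, the remainder is routine. The bound $d_\lambda(x_n,x_{n+1})\le q^{\,n}d_\lambda(x_0,x_1)$ is summable in $n$ for each $\lambda$, whence $(x_n)$ is $d_\lambda$-Cauchy for every $\lambda$, i.e.\ Cauchy in the uniformity; sequential completeness then yields $x_n\to x^*$ for some $x^*\in X$. To conclude $x^*\in Tx^*$, I estimate for each $\lambda$, using $x_{n+1}\in Tx_n$,
\[
d_\lambda(x^*,Tx^*)\le d_\lambda(x^*,x_{n+1})+\sup_{z\in Tx_n}d_\lambda(z,Tx^*)\le d_\lambda(x^*,x_{n+1})+H^\lambda(Tx_n,Tx^*),
\]
and since $H^\lambda(Tx_n,Tx^*)\le k\,d_\lambda(x_n,x^*)\to0$ (this is just the Lipschitz, hence $\mathcal{H}$-continuous, character of $T$ already exploited in Theorem \ref{t4}), both terms on the right tend to $0$. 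Thus $d_\lambda(x^*,Tx^*)=0$ for every $\lambda$, so $x^*\in\overline{Tx^*}=Tx^*$ because $Tx^*$ is closed, and $x^*$ is the sought fixed point.
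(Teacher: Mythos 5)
Your plan is not yet a proof: the step you yourself flag as ``the real obstacle'' --- producing a \emph{single} point $x_{n+1}\in Tx_n$ that is a near-minimizer for \emph{every} $\lambda\in\Lambda$ simultaneously --- is left unresolved, and the tools you propose for it cannot resolve it for $\mathcal{CB}(X)$-valued maps. Condition (S) only yields, for each \emph{finite} subfamily $\lambda_1,\dots,\lambda_m$, a dominating pseudometric $d_\nu$; it gives no pseudometric dominating the whole (possibly uncountable) family, and a diagonal scheme over finite subfamilies cannot produce one sequence that is $d_\lambda$-Cauchy for all $\lambda$ at once. Concretely, the sets $C_{\lambda,\eta}=\{y\in Tx_n: d_\lambda(x_n,y)\le d_\lambda(x_n,Tx_n)+\eta\}$ do form, thanks to (S), a filter base of closed subsets of $Tx_n$, but when $Tx_n$ is merely closed and bounded this filter base may have empty intersection, so no admissible $x_{n+1}$ need exist. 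This is exactly why the paper runs that filter-base selection argument only in Corollary \ref{c13}, where $T$ takes values in $\mathcal{K}(X)$ and compactness forces the intersection to be nonempty. There is also a secondary defect in your inductive bound: $d_\lambda(x_n,x_{n+1})\le q^{\,n}d_\lambda(x_0,x_1)$ leaves zero slack whenever $d_\lambda(x_0,x_1)=0$ (pseudometrics may vanish at distinct points); then you would need an \emph{exact} minimizer in $Tx_n$, which closedness does not supply, since $d_\lambda(x_n,Tx_n)=0$ only places $x_n$ in the $d_\lambda$-closure of $Tx_n$, a set generally larger than $Tx_n$ itself. Additive slack ($q^{\,n}$ instead of a multiple of $d_\lambda(x_0,x_1)$) repairs this for each fixed $\lambda$, but not the simultaneity problem.

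The paper's proof of Theorem \ref{t15} avoids selection over the family altogether: it collapses the uniformity into the single metric $\rho(x,y)=\sup_{\lambda\in\Lambda}d_\lambda(x,y)\wedge 1$, shows that sequential completeness of $X$ makes $(X,\rho)$ a complete metric space, observes that the associated Hausdorff metric satisfies $H^\rho(A,B)=\sup_{\lambda\in\Lambda}H^\lambda(A,B)\wedge 1$, so that $T$ becomes a contraction for the one metric $H^\rho$, and then invokes Nadler's theorem, whose iteration makes a single selection per step with respect to $\rho$ alone. That reduction to one metric is precisely the idea missing from your outline; without it (or without strengthening the hypothesis to compact values, as in Corollary \ref{c13}), the direct Nadler-style iteration you describe cannot be completed.
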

\begin{proof} Let
$\{x_n\}_{n\in\nn}$ be a Cauchy sequence with respect to the metric $\rho$ defined, for $x,y\in X$, as
$\rho(x,y)=\sup_{\lambda\in\Lambda}d_\lambda(x,y)\wedge1$. Hence, $\{x_n\}_{n\in\nn}$ is a Cauchy sequence with respect to the uniformity $\U= \{\mathrm{U}(\lambda,\epsilon);\lambda\in\Lambda, \epsilon>0\}$ and since $X$ is
sequentially complete, there exists $x\in X$ such that $\{x_n\}_{n\in\nn}$ converges to $x$. Let
$\epsilon>0$ and $N\in\nn$ such that $\rho(x_m,x_n)<\epsilon$ whether
$m,n\geq N$. From the lower semi-continuity of $\rho(x_m,\cdot)$, we
have $\rho(x_m,x)\leq\epsilon$, for each $m\geq N$. This proves that $\{x_n\}_{n\in\nn}$ converges to $x$ according to $\rho$ and
consequently $(X,\rho)$ is a complete metric space.

Let $H^\rho$ be the Hausdorff metric on $\mathcal{CB}(X)$ defined from $\rho$. It is easy to see that for each $A,B\in\mathcal{CB}(X)$,
$H^\rho(A,B)=\sup_{\lambda\in\Lambda}H_\lambda(A,B)\wedge1$ and consequently, according to $H^\rho$, $T$ is a contraction. It follows from Theorem 5
by Nadler \cite{Na69} that there exists $x^*\in X$ such that  $x^*\in T(x^*)$, which completes the proof.
\end{proof}

\section{Extended versions of Caristi's theorem}

In this section, the space $X$ is assumed complete with respect to $\U$, i.e.\ any Cauchy filter base in $X$ converges.

\begin{lemma}\label{le10}
 Let $\{\varphi_\lambda\}_{\lambda\in\Lambda}$ be a family of lower semi-continuous and  bounded below functions from $X$ to
$\rr$ and $\preceq$ be a relation on $X$ defined as follows: $u\preceq v$, if and only if, for all $\lambda\in\Lambda$, $d_\lambda(u,v)\leq
\varphi_\lambda(u)-\varphi_\lambda(v)$. Then, $\preceq$ is a partial order relation on $X$ and for each $x_0\in X$, there exists a maximal element
$x^*\in X$ such that $x_0\preceq x^*$.
\end{lemma}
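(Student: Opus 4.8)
The plan is to prove this as a Zorn-type / Brøndsted-maximality argument, which is the standard strategy for Caristi-flavored results. The order relation $\preceq$ mixes an extended metric structure with a family of scalar ``potential'' functions $\varphi_\lambda$, so I expect the verification that $\preceq$ is a partial order to be routine, while the existence of a maximal element above a given $x_0$ will be the substantive part.

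First I would check that $\preceq$ is a partial order. Reflexivity is immediate since $d_\lambda(u,u)=0=\varphi_\lambda(u)-\varphi_\lambda(u)$. For antisymmetry, if $u\preceq v$ and $v\preceq u$ then for every $\lambda$ we get $d_\lambda(u,v)\le\varphi_\lambda(u)-\varphi_\lambda(v)$ and $d_\lambda(v,u)\le\varphi_\lambda(v)-\varphi_\lambda(u)$; adding these forces $2d_\lambda(u,v)\le 0$, so $d_\lambda(u,v)=0$ for all $\lambda$, and since $\{d_\lambda\}$ generates a (Hausdorff) uniformity separating points we conclude $u=v$. Transitivity follows by summing the defining inequalities along $u\preceq v\preceq w$ and using the triangle inequality $d_\lambda(u,w)\le d_\lambda(u,v)+d_\lambda(v,w)$.

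The heart of the proof is producing the maximal element. Fix $x_0$ and work inside the section $S=\{x\in X: x_0\preceq x\}$, which is nonempty and downward-bounded in the relevant sense. My plan is to verify the hypothesis of Zorn's lemma on $(S,\preceq)$ by showing every chain has an upper bound, exploiting completeness of $X$. Given a chain $\mathcal{C}\subseteq S$, note that for each fixed $\lambda$ the map $x\mapsto\varphi_\lambda(x)$ is nonincreasing along the chain (if $u\preceq v$ then $\varphi_\lambda(v)\le\varphi_\lambda(u)$), and it is bounded below, so $\inf_{x\in\mathcal{C}}\varphi_\lambda(x)=:m_\lambda$ is finite. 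The key estimate is that for $u\preceq v$ in the chain, $d_\lambda(u,v)\le\varphi_\lambda(u)-\varphi_\lambda(v)$, and as we move out along the chain the right-hand side shrinks toward $0$ for each $\lambda$; this is exactly a Cauchy condition. Concretely, I would form the filter base generated by the ``tails'' $T_u=\{v\in\mathcal{C}: u\preceq v\}$ for $u\in\mathcal{C}$, and show it is a Cauchy filter base: given $\lambda$ and $\epsilon>0$, pick $u\in\mathcal{C}$ with $\varphi_\lambda(u)<m_\lambda+\epsilon$, so that any $v,w\in T_u$ satisfy $d_\lambda(v,w)\le d_\lambda(v,\cdot)+\cdots<\epsilon$ after combining the potential inequalities. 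By completeness of $X$ this Cauchy filter base converges to some $x^*\in X$.

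It remains to show $x^*$ is an upper bound for $\mathcal{C}$ in $\preceq$, i.e.\ $u\preceq x^*$ for every $u\in\mathcal{C}$; here is where lower semi-continuity of each $\varphi_\lambda$ does its work. Fixing $u\in\mathcal{C}$, for every $v\in\mathcal{C}$ with $u\preceq v$ we have $d_\lambda(u,v)\le\varphi_\lambda(u)-\varphi_\lambda(v)$; passing $v$ to the limit $x^*$ and using continuity of $d_\lambda(u,\cdot)$ on the left together with $\varphi_\lambda(x^*)\le\liminf\varphi_\lambda(v)$ (lower semi-continuity) on the right yields $d_\lambda(u,x^*)\le\varphi_\lambda(u)-\varphi_\lambda(x^*)$ for all $\lambda$, hence $u\preceq x^*$. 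Taking $u=x_0$ shows $x^*\in S$, so Zorn's lemma applies and furnishes a maximal element $x^*\ge x_0$, completing the argument. The main obstacle I anticipate is the limiting step: making the convergence of the chain rigorous in a general (possibly non-sequential) uniform space requires the filter-base formulation rather than sequences, and one must be careful that the lower semi-continuity inequality survives passage to a filter limit; the saturation condition (S) is what lets us treat the finitely many $\lambda$'s uniformly when needed.
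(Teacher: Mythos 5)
Your proposal is correct and follows essentially the same route as the paper's own proof: both verify the order axioms, apply Zorn's lemma to the section above $x_0$, turn a chain into a Cauchy filter base via its tails $T_u$ (which are exactly the paper's sets $I(x)\cap C$), invoke completeness to obtain a limit, and use lower semi-continuity of the $\varphi_\lambda$ to show the limit is an upper bound of the chain. The only cosmetic difference is that the paper packages the semi-continuity as closedness of the sets $I(x)=\{y\in X: \varphi_\lambda(y)+d_\lambda(x,y)\leq\varphi_\lambda(x)\ \mbox{for all }\lambda\}$, whereas you pass to the limit in the defining inequality directly along the filter.
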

\begin{proof} It is easy to see that $\preceq$ is a partial order relation on $X$. For each $x\in
X$, let $I(x)=\{y\in X:x\preceq y\}$. Since for each $\lambda\in\Lambda$,  $\varphi_\lambda$ is lower semi-continuous and
$I(x)=\bigcap_{\lambda\in\Lambda}\{y\in X: \varphi_\lambda(y)+d_\lambda(x,y)\leq \varphi_\lambda(x)\}$, we have $I(x)$ is closed. Fix $x_0\in X$, let $C$ be a
totally ordered subset of $I(x_0)$ and $\mathcal{B}=\{I(x)\cap C:x\in C\}$. Hence, $\mathcal{B}$ is a filter base in $I(x_0)$. Let $\epsilon>0$.
Because of for each $\lambda\in\Lambda$, $\varphi_\lambda$ is bounded below, there exists $L_\lambda=\inf\{\varphi_\lambda(x):x\in C\}$. Let us choose
$x_\lambda\in C$ such that $\varphi_\lambda(x_\lambda)<L_\lambda+\epsilon$ and notice $\varphi_\lambda$ is decreasing. Consequently, for
$x_{\lambda}\preceq u\preceq v$, we have $\varphi_\lambda(u)-\varphi_\lambda(v)<\epsilon$ and hence $d_\lambda(u,v)<\epsilon$. This fact proves
$\mathcal{B}$ is a Cauchy filter base in $I(x_0)$ and thus it converges to some $v\in I(x_0)$. Since for each $x\in C$, $I(x)$ is closed, we have
$\{v\}=\bigcap_{x\in C}\overline{I(x)\cap C}\subseteq \bigcap_{x\in C}I(x)$ and hence $v$ is an upper bound of $C$. Therefore, by Zorn's Lemma there
exists a maximal element $x^*\in I(x_0)$, concluding de proof.
\end{proof}

\begin{remark} Lemma of Mizoguchi in \cite{Mi90} could have been used to prove Lemma \ref{le10}. However,  for the sake of completeness, we preferred to give an independent proof. On the other hand, when for each $\lambda\in\Lambda$, $\varphi_\lambda=\varphi$ does not depend on $\lambda$, the proof of the above lemma could be carried out by defining a suitable sequence instead of a filter base. In this case, this lemma requires only sequentially completeness instead of completeness.
\end{remark}

Theorem \ref{th11} below gives two extended versions of Caristi's theorem. In particular, Theorem 1 by Mizogushi in \cite{Mi90}, which is an improved version of Caristi's Theorem  \cite{Ca76}, is generalized by means of (\ref{th11}.1) below.

\begin{theorem}\label{th11}  Let $T:X\to \C(X)$ be a multivalued function  and suppose for each $\lambda\in\Lambda$,  $\varphi_\lambda:X\to\rr$ is a lower
semi-continuous and bounded below function. Then,  the following two propositions hold:
\begin{description}
\item[(\ref{th11}.1)] If for  each $x\in X$, there exists $y\in Tx$ such that for  each $\lambda\in\Lambda$, $d_\lambda(x,y)\leq \varphi_\lambda(x)-\varphi_\lambda(y)$, then, $T$ has a fixed point.
\item[(\ref{th11}.2)] If for  each $x\in X$, each $y\in Tx$ and each $\lambda\in\Lambda$,  $d_\lambda(x,y)\leq
\varphi_\lambda(x)-\varphi_\lambda(y)$, then, there exists $x^*\in X$ such that $\{x^*\}= Tx^*$.
\end{description}
\end{theorem}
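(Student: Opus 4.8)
The plan is to read off both statements directly from Lemma \ref{le10}, after observing that the Caristi-type inequality ``$d_\lambda(x,y)\leq\varphi_\lambda(x)-\varphi_\lambda(y)$ for all $\lambda\in\Lambda$'' is precisely the assertion $x\preceq y$ for the partial order $\preceq$ introduced there. Under this dictionary, the hypotheses of the two parts describe the order-position of the points of $Tx$ relative to $x$, and the conclusions will emerge from the existence of a $\preceq$-maximal element, together with antisymmetry of $\preceq$.

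For (\ref{th11}.1), I would fix an arbitrary $x_0\in X$ and apply Lemma \ref{le10} to obtain a $\preceq$-maximal element $x^*$ with $x_0\preceq x^*$. Invoking the hypothesis at the point $x^*$ yields some $y\in Tx^*$ satisfying $x^*\preceq y$. Since $x^*$ is maximal and $\preceq$ is an antisymmetric partial order, the relation $x^*\preceq y$ forces $y=x^*$; hence $x^*\in Tx^*$, and $x^*$ is the desired fixed point.

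For (\ref{th11}.2) the argument runs in parallel, but the stronger hypothesis now applies to \emph{every} $y\in Tx^*$: each such $y$ satisfies $x^*\preceq y$ and therefore, by maximality, equals $x^*$. Because $Tx^*\in\C(X)$ is nonempty, this collapses $Tx^*$ to the single point $x^*$, i.e.\ $\{x^*\}=Tx^*$.

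I do not anticipate a genuine obstacle here, as the analytic substance of the result is already carried by Lemma \ref{le10} (where completeness of $X$ and the lower semi-continuity and boundedness below of the $\varphi_\lambda$ are used to produce the maximal element via a Cauchy filter base and Zorn's Lemma). The only points requiring care are the correct use of maximality, through antisymmetry of $\preceq$, to pass from $x^*\preceq y$ to the equality $y=x^*$, and, in part (\ref{th11}.2), the explicit appeal to the nonemptiness of $Tx^*$ guaranteed by $Tx^*\in\C(X)$ in order to conclude $Tx^*=\{x^*\}$ rather than merely $Tx^*\subseteq\{x^*\}$.
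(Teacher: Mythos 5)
Your proposal is correct and follows essentially the same route as the paper: both apply Lemma \ref{le10} to produce a $\preceq$-maximal element $x^*$, identify the Caristi inequality with the relation $x^*\preceq y$, and use maximality to force $y=x^*$, with part (\ref{th11}.2) additionally using nonemptiness of $Tx^*$ for the inclusion $\{x^*\}\subseteq Tx^*$ and the hypothesis applied to all of $Tx^*$ for the reverse inclusion. No gaps; this matches the paper's argument.
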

\begin{proof}
From Lemma \ref{le10}, there exists a maximal element $x^*\in X$ and from assumption in (\ref{th11}.1), there exists $y\in Tx^*$  such that $x^*\preceq y$. Since $x^*$ is maximal, we have $x^*=y$ and hence $x^*\in Tx^*$. Therefore,  (\ref{th11}.1) holds.

Let us prove (\ref{th11}.2). Let $x^*\in X$ the maximal element of $X$. Since $Tx^*$ is nonempty, assumption in (\ref{th11}.2) implies that there exists $y\in Tx^*$ such that $x^*\preceq y$. Thus $x^*=y$ and $\{x^*\}\subseteq Tx^*$. By applying assumption in (\ref{th11}.2) again and the maximality of $x^*$, we have $T(x^*)\subseteq \{x^*\}$ and the proof is complete.
\end{proof}

When the functions $\varphi_\lambda$, in Theorem \ref{th11}, does not depend on $\lambda\in\Lambda$,  this theorem coincides with Theorem 2 by Hamel in \cite{Ha05}. Even, in this case, it is only required  $X$ to be sequentially complete. However, as we think in the proof of Corollary \ref{c13} below, it is a great limitation to assume all the functions $\varphi_\lambda$ are equal.

The following corollary follows from Theorem \ref{th11} and  is an extension of the  well-known theorem by Caristi \cite{Ca67} for single valued functions.
\begin{corollary}\label{c12} Let $f:X\to X$ be an arbitrary function. Suppose  for each $\lambda\in\Lambda$, $\varphi_\lambda:X\to\rr$ is a  lower
semi-continuous and bounded below function such that for each $x\in X$, $d_\lambda(x,f(x))\leq \varphi_\lambda(x)-\varphi_\lambda(f(x))$. Then, there
exists $x^*\in X$ such that $f(x^*)=x^*$.
\end{corollary}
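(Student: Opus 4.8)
The plan is to deduce this single-valued statement directly from the multivalued result (\ref{th11}.1) by regarding $f$ as a singleton-valued multifunction. First I would define $T:X\to\C(X)$ by $Tx=\{f(x)\}$. Since every point of $X$ is closed (the uniformity being separated, as is implicitly assumed throughout the paper whenever singletons are treated as limits and elements of $\C(X)$, e.g.\ in the identity $\{x^*\}=\bigcap_{F\in\F^*}F$ of Theorem \ref{t5}), each $Tx$ is a nonempty closed set, so $T$ is a legitimate multifunction into $\C(X)$.

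Next I would verify that the hypotheses of (\ref{th11}.1) are met. For each $x\in X$ the set $Tx$ has the single element $y=f(x)$, and the assumption of the corollary states precisely that $d_\lambda(x,f(x))\le\varphi_\lambda(x)-\varphi_\lambda(f(x))$ for every $\lambda\in\Lambda$. Hence for each $x\in X$ there exists $y\in Tx$, namely $y=f(x)$, with $d_\lambda(x,y)\le\varphi_\lambda(x)-\varphi_\lambda(y)$ for all $\lambda$, which is exactly the condition required in (\ref{th11}.1). The functions $\varphi_\lambda$ are lower semi-continuous and bounded below by hypothesis, matching the standing assumption of Theorem \ref{th11}. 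Applying (\ref{th11}.1) then yields a point $x^*\in X$ with $x^*\in Tx^*=\{f(x^*)\}$, that is $f(x^*)=x^*$, which is the desired conclusion.

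I expect the only delicate point to be the requirement that $Tx\in\C(X)$, i.e.\ that singletons be closed; this is precisely where separatedness of $\U$ enters. Should one wish to avoid assuming the induced topology is $T_1$, I would instead set $Tx=\overline{\{f(x)\}}$, note that $f(x)\in Tx$ so that the same inequality verifies the hypothesis of (\ref{th11}.1), obtain a point $x^*\in\overline{\{f(x^*)\}}$, and then conclude that $d_\lambda(x^*,f(x^*))=0$ for all $\lambda\in\Lambda$; in a separated uniform space this once more forces $f(x^*)=x^*$. In either formulation the argument is a clean specialization of the multivalued theorem, with no genuine new analytic content.
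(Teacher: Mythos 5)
Your proof is correct and is essentially the paper's own argument: the paper obtains Corollary \ref{c12} precisely by specializing (\ref{th11}.1) to the singleton-valued multifunction $Tx=\{f(x)\}$, exactly as you do. Your additional care about singletons being closed (separatedness of $\U$), with the fallback $Tx=\overline{\{f(x)\}}$, addresses a point the paper silently glosses over but does not change the route.
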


Corollaries \ref{c13} and \ref{c14} below are extensions of  the classical Banach contraction principle.

\begin{corollary}\label{c13}
 Let $\F=\{k_\lambda\}_{\lambda\in\Lambda}$ be a family of constants such that for each $\lambda\in\Lambda$, $0\leq k_\lambda<1$ and $T:X\to \mathcal{K}(X)$ be an $\F$-contractive multi-function. Then, $T$ has a fixed point.
\end{corollary}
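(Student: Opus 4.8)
The plan is to derive the result from the multivalued Caristi theorem, precisely part (\ref{th11}.1), in exact analogy with the classical derivation of Banach's principle from Caristi's theorem. For each $\lambda\in\Lambda$ I would introduce the Lyapunov-type potential
$$
\varphi_\lambda(x)=\frac{1}{1-k_\lambda}\,d_\lambda(x,Tx),\qquad x\in X.
$$
These potentials are genuinely $\lambda$-dependent, since the constants $k_\lambda$ differ, which is exactly the flexibility the remark following Theorem \ref{th11} insists upon (and which Hamel's equal-$\varphi_\lambda$ version lacks). If, for every $x$, one can produce a single $y\in Tx$ with $d_\lambda(x,y)\le\varphi_\lambda(x)-\varphi_\lambda(y)$ for all $\lambda$ simultaneously, then $x\preceq y$ in the order of Lemma \ref{le10}, and the hypothesis of (\ref{th11}.1) is met, yielding a fixed point.

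The structural hypotheses on the $\varphi_\lambda$ are immediate. Each $\varphi_\lambda$ is nonnegative, hence bounded below. For lower semicontinuity I would use that an $\F$-contractive map is $\H$-Lipschitz, so by Theorem \ref{t4} it is weakly continuous; concretely, the estimate $|d_\lambda(u,Tu)-d_\lambda(v,Tv)|\le d_\lambda(u,v)+H^\lambda(Tu,Tv)\le(1+k_\lambda)\,d_\lambda(u,v)$ shows that $x\mapsto d_\lambda(x,Tx)$ is Lipschitz, in particular lower semicontinuous. Thus all of the difficulty is concentrated in the selection of $y$.

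To analyse the selection, fix $x$ and use that $Tx$ is compact, so $z\mapsto d_\lambda(x,z)$ attains its minimum $d_\lambda(x,Tx)$ on $Tx$. For any candidate $y\in Tx$, the inclusion $y\in Tx$ gives $d_\lambda(y,Ty)\le H^\lambda(Tx,Ty)\le k_\lambda d_\lambda(x,y)$, whence $\varphi_\lambda(y)\le\frac{k_\lambda}{1-k_\lambda}d_\lambda(x,y)$; substituting into the Caristi inequality for the index $\lambda$, it collapses to the single demand $d_\lambda(x,y)\le d_\lambda(x,Tx)$, i.e.\ $y$ must minimise $d_\lambda(x,\cdot)$ over $Tx$. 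Writing $C_\lambda=\{y\in Tx:d_\lambda(x,y)+\varphi_\lambda(y)\le\varphi_\lambda(x)\}$, these are nonempty closed subsets of the compact set $Tx$, so it would suffice to establish that the family $\{C_\lambda\}_{\lambda\in\Lambda}$ has the finite intersection property and then invoke compactness to extract a common $y\in\bigcap_\lambda C_\lambda$.

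This final step is where I expect the main obstacle to lie, and it is exactly where compactness of the images is indispensable. The per-$\lambda$ minimisers of $d_\lambda(x,\cdot)$ over $Tx$ need not coincide, and—even using the saturation condition (S) to pass to a dominating index $\nu$ for a finite subfamily—a $d_\nu$-minimiser does not automatically minimise the dominated $d_\lambda$, so the finite intersection property of $\{C_\lambda\}$ is not a formality and must be argued with genuine care. This is precisely the difficulty that evaporates in the single-valued Corollary \ref{c12}, where $y=f(x)$ is forced. It is also worth stressing the contrast with the regime $\sup_\lambda k_\lambda<1$, which is already settled by Theorem \ref{t15} (and renders $T$ a set-contraction via Theorem \ref{t9}): the real content of Corollary \ref{c13} is the case $\sup_\lambda k_\lambda=1$, where no uniform contraction constant exists and the compactness-driven simultaneous selection described above is the only available route, so producing that common $y$ is the heart of the proof.
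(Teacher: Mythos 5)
Your proposal is not a proof: it reproduces the paper's own setup and then stops at exactly the step that carries all the content. The paper's argument uses precisely your potentials $\varphi_\lambda(x)=d_\lambda(x,Tx)/(1-k_\lambda)$, obtains their lower semi-continuity from $\H$-continuity just as you do, and likewise reduces everything to producing, for each $x$, a single $y\in Tx$ that works for \emph{all} $\lambda$ simultaneously; the only cosmetic difference is that the paper packages the selection as a single-valued map $f(x)\in Tx$ with $d_\lambda(x,f(x))=d_\lambda(x,Tx)$ for every $\lambda$ and then invokes Corollary \ref{c12} rather than (\ref{th11}.1). Where the paper goes beyond your text is precisely the construction of that selection: it introduces the sets $C_{\lambda,\eta}(x)=\{y\in Tx:d_\lambda(x,y)\leq d_\lambda(x,Tx)+\eta\}$, argues from condition (S) that they form a filter base of compact subsets of $Tx$, and concludes that $\bigcap_{\lambda\in\Lambda,\eta>0}C_{\lambda,\eta}(x)\neq\oo$. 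You supply no substitute for this; stating that the finite intersection property ``must be argued with genuine care'' names the gap but does not close it, so as submitted your argument establishes nothing beyond what Corollary \ref{c12} already says.

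That said, your hesitation is well founded, and it is worth recording why: a near-minimiser for a dominating pseudometric $d_\nu$ need not be a near-minimiser for a dominated $d_\lambda$, and simultaneous nearest points can simply fail to exist. Take $X=\rr^2$ with the saturated family $d_1((a,b),(a',b'))=|a-a'|$, $d_2((a,b),(a',b'))=|b-b'|$, $d_3=\max\{d_1,d_2\}$, and let $K=\{(1,0),(0,1)\}$, $x_0=(0,0)$. Then $d_1(x_0,K)=d_2(x_0,K)=0$, the unique $d_1$-nearest point of $K$ is $(0,1)$ and the unique $d_2$-nearest point is $(1,0)$, so $C_{1,\eta}(x_0)$ and $C_{2,\eta}(x_0)$ are disjoint for $\eta<1$. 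The constant multi-function $Tx\equiv K$ is $\F$-contractive with compact values, yet no $y\in Tx_0$ satisfies your Caristi inequalities for both indices (with your $\varphi_\lambda$ one gets $C_1\cap C_2=\oo$), so the finite intersection property you would need can genuinely fail; the same example defeats the inclusion $C_{\nu,\eta}(x)\subseteq C_{\lambda,\eta_1}(x)\cap C_{\mu,\eta_2}(x)$ claimed in the paper's proof, whose filter-base step is therefore itself flawed. In short, you correctly located the crux, but locating it is all you did: to complete a proof along this route one must either restrict to $\sup_\lambda k_\lambda<1$ (already handled by Theorem \ref{t15}) or devise a selection argument that does not demand simultaneous exact minimisation.
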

\begin{proof}
For each $x\in X$, $\lambda\in\Lambda$ and $\eta>0$, let $C_{\lambda,\eta}(x)= \{y\in Tx:d_\lambda(x,y)\leq d_\lambda(x,Tx)+\eta\}$. Since $\{d_\lambda\}_{\lambda\in\Lambda}$ satisfies condition (S), for each $x\in C$, $\lambda,\mu\in\Lambda$ and $\eta_1,\eta_2>0$, there exists $\nu\in\Lambda$ such that $C_{\nu,\eta}(x)\subseteq C_{\lambda,\eta_1}(x)\cap C_{\mu,\eta_2}(x)$, where $\eta=\min\{d_\lambda(x,Tx)+\eta_1,d_\mu(x,Tx)+\eta_2\}$. Thus, $\{C_{\lambda,\eta}(x)\}_{\lambda\in\Lambda}$ is a filter base of compact subsets of $Tx$ and hence $\bigcap_{\lambda\in\Lambda,\eta>0}C_{\lambda,\eta}(x)$ is nonempty. Consequently, for each $x\in X$, there exists $f(x)\in Tx$ such that $d_\lambda(x,Tx)=d_\lambda(x,f(x))$, for all $\lambda\in\Lambda$. We have
$$
d_\lambda(f(x),Tf(x))\leq H^\lambda(Tx,Tf(x))\leq k_\lambda d_\lambda(x,f(x))
$$
and hence, by defining $\varphi_\lambda:X\to\rr$ as $\varphi_\lambda(x)=d_\lambda(x,Tx)/(1-k_\lambda)$, we obtain
$$
d_\lambda(x,f(x))\leq \varphi_\lambda(x)-\varphi_\lambda(f(x)).
$$
Moreover $T$ is $\H$-continuous. Hence, Theorems \ref{t2} and \ref{t3} imply that $\varphi_\lambda$ is lower semi-continuous and from Corollary \ref{c12}, there exists $x^*\in X$ such that $f(x^*)=x^*$. Therefore, $x^*\in Tx^*$ and the proof is complete.
\end{proof}

\begin{remark}
Note that the functions $\varphi_\lambda$ in Corollary \ref{c13} depend on $\lambda$ not only through  $\nu_\lambda=1-k_\lambda$, but also due to $ d_\lambda $. Hence, this corollary cannot be obtained from Theorem 1 by Mizoguchi in \cite{Mi90}.
\end{remark}

Corollary \ref{c13} enables us to obtain the following old result by Tarafdar in \cite{Ta74}.

\begin{corollary}\label{c14} Let $\F=\{k_\lambda\}_{\lambda\in\Lambda}$ be a family of constants such that for each $\lambda\in\Lambda$, $0\leq
k_\lambda<1$ and $f:X\to X$ be an $\F$-contractive function, i.e.\ for any $x,y\in X$, $d_\lambda(f(x),f(y))\leq k_\lambda d_\lambda(x,y)$. Then, there
exists a unique $x^*\in X$ such that $f(x^*)=x^*$.
\end{corollary}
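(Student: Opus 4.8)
The plan is to deduce this statement from Corollary \ref{c13} by regarding the single-valued map $f$ as a compact-valued multi-function. First I would define $T\colon X\to\mathcal{K}(X)$ by $Tx=\{f(x)\}$; since a singleton is trivially compact, $T$ indeed takes values in $\mathcal{K}(X)$, and a fixed point of $T$ in the sense $x\in Tx$ is exactly a point with $f(x)=x$. Thus the existence half of the corollary will follow once I verify that $T$ inherits the $\F$-contractivity of $f$.

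For that verification I would compute the Hausdorff pseudometrics on singletons. Using the defining formula for $H^\lambda$ one sees immediately that $H^\lambda(\{a\},\{b\})=d_\lambda(a,b)$ for all $a,b\in X$ and $\lambda\in\Lambda$, because both suprema in the definition collapse to $d_\lambda(a,b)$. Hence for all $x,y\in X$ and $\lambda\in\Lambda$,
$$
H^\lambda(Tx,Ty)=d_\lambda(f(x),f(y))\leq k_\lambda\, d_\lambda(x,y),
$$
so $T$ is an $\F$-contractive multi-function with values in $\mathcal{K}(X)$. Since each $k_\lambda<1$, Corollary \ref{c13} applies directly and yields some $x^*\in X$ with $x^*\in Tx^*=\{f(x^*)\}$, that is, $f(x^*)=x^*$.

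It then remains to establish uniqueness, which Corollary \ref{c13} does not provide on its own. Suppose $f(x^*)=x^*$ and $f(y^*)=y^*$. For every $\lambda\in\Lambda$ the contraction hypothesis gives $d_\lambda(x^*,y^*)=d_\lambda(f(x^*),f(y^*))\leq k_\lambda d_\lambda(x^*,y^*)$, and since $0\leq k_\lambda<1$ this forces $d_\lambda(x^*,y^*)=0$. As $\lambda$ is arbitrary and the family $\{d_\lambda\}_{\lambda\in\Lambda}$ generates the (separated) uniformity $\U$, I conclude $x^*=y^*$. I expect no genuine obstacle here: the entire argument is a routine reduction, the only points deserving care being that existence is inherited \emph{verbatim} from the multivalued result once the singleton computation is recorded, whereas uniqueness is an independent and elementary consequence of the pointwise contraction inequalities together with the separatedness of the uniform space $X$.
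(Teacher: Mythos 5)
Your proposal is correct and follows essentially the same route as the paper: existence is obtained by applying Corollary \ref{c13} to $f$ viewed as a (singleton, hence compact-valued) $\F$-contractive multi-function, and uniqueness follows from the pointwise inequality $d_\lambda(x^*,x^{**})\leq k_\lambda d_\lambda(x^*,x^{**})$ forcing $d_\lambda(x^*,x^{**})=0$ for all $\lambda$. Your write-up is in fact slightly more careful than the paper's, since you make explicit both the identity $H^\lambda(\{a\},\{b\})=d_\lambda(a,b)$ and the separatedness of $X$ needed to pass from $d_\lambda(x^*,x^{**})=0$ for all $\lambda$ to $x^*=x^{**}$, a hypothesis the paper uses tacitly.
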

\begin{proof} From Corollary \ref{c13}, there exists $x^*\in X$ such that $f(x^*)=x^*$.
Suppose $x^{**}$ is another fixed point of $f$. Hence
$
d_\lambda(x^*,x^{**})=d_\lambda(f(x^*),f(x^{**}))\leq k_\lambda
d_\lambda(x^*,x^{**})
$
and hence uniqueness follows due to for all $\lambda\in\Lambda$, $d_\lambda(x^*,x^{**})=0$. Therefore, the proof is complete.
\end{proof}

Next we state an extended version of the Bishop-Phelps theorem as follows.

\begin{theorem}\label{t16} For each $\lambda\in\Lambda$, let $\varphi_\lambda:X\to\rr$ be a lower semi-continuous  and bounded below function. Then,  for
each $x_0\in X$, there exists $x^*\in X$ such that the following two conditions hold:
\begin{description}
  \item[(\ref{t16}.1)] for any $\lambda\in\Lambda$, $\varphi_\lambda(x^*)+d_\lambda(x_0,x^*)\leq\varphi_\lambda(x_0)$; and
  \item[(\ref{t16}.2)] for each $x\in X$ with $x\neq x^*$, there exists $\mu\in \Lambda$ satisfying $\varphi_\mu(x^*)<\varphi_\mu(x)+d_\mu(x,x^*)$.
\end{description}
\end{theorem}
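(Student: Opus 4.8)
The plan is to observe that Theorem \ref{t16} is, after a trivial rearrangement, nothing more than a restatement of Lemma \ref{le10} applied to the very same family $\{\varphi_\lambda\}_{\lambda\in\Lambda}$. Recall the order introduced there: $u\preceq v$ if and only if $d_\lambda(u,v)\leq\varphi_\lambda(u)-\varphi_\lambda(v)$ for all $\lambda\in\Lambda$. First I would fix $x_0\in X$ and invoke Lemma \ref{le10} to produce a maximal element $x^*\in X$ with $x_0\preceq x^*$. All the analytic content -- lower semi-continuity making the up-sets $I(x)$ closed, the bounded-below hypothesis turning a chain into a Cauchy filter base, completeness of $X$ supplying its limit, and Zorn's Lemma -- is already packaged inside that lemma, so nothing new has to be established at this level.

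The remaining work is pure translation. The relation $x_0\preceq x^*$ says exactly that $d_\lambda(x_0,x^*)\leq\varphi_\lambda(x_0)-\varphi_\lambda(x^*)$ for every $\lambda\in\Lambda$, which is condition (\ref{t16}.1) after transposing $\varphi_\lambda(x^*)$ to the left-hand side. For (\ref{t16}.2), maximality of $x^*$ means that no $x\neq x^*$ can satisfy $x^*\preceq x$; unfolding the definition of $\preceq$, this failure produces some $\mu\in\Lambda$ with $d_\mu(x^*,x)>\varphi_\mu(x^*)-\varphi_\mu(x)$. Since the pseudometrics are symmetric, $d_\mu(x^*,x)=d_\mu(x,x^*)$, and this inequality is precisely $\varphi_\mu(x^*)<\varphi_\mu(x)+d_\mu(x,x^*)$, i.e.\ condition (\ref{t16}.2).

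I do not anticipate any genuine obstacle: the two features to keep in mind are that $\preceq$ is antisymmetric, so that maximality really excludes every $x\neq x^*$ rather than only elements strictly dominating $x^*$ in some weaker sense, and that each $d_\mu$ is symmetric, which is what lets the negated order relation be rewritten with the arguments of $d_\mu$ in the order appearing in (\ref{t16}.2). Both hold by construction. Thus the proof reduces to citing Lemma \ref{le10} and reading off the two inequalities; the substance of this Bishop--Phelps type statement lies entirely in the maximal-element construction already carried out in that lemma.
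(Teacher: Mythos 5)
Your proposal is correct and is exactly the paper's own proof: the paper likewise invokes Lemma~\ref{le10} to obtain a maximal element $x^*$ above $x_0$, reads condition~(\ref{t16}.1) off the relation $x_0\preceq x^*$, and derives condition~(\ref{t16}.2) from maximality by negating $x^*\preceq x$ for $x\neq x^*$. Your version merely spells out the translation (symmetry of $d_\mu$, the meaning of maximality) that the paper leaves implicit.
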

\begin{proof}
Let $x_0\in X$. From Lemma \ref{le10}, there exists a maximal element $x^*\in X$ such that  (\ref{t16}.1) is satisfied. The maximality of $x^*$ implies condition (\ref{t16}.2), therefore the proof is complete.
\end{proof}

Also, a generalization of Ekeland's variational principle is given below. A function $\varphi:X\to\rr\cup\{\infty\}$ is said to be proper, whenever
there exists $x_0\in X$ such that $\varphi(x_0)<\infty$.

\begin{theorem}\label{t17} For each $\lambda\in\Lambda$, let $\varphi_\lambda:X\to\rr\cup\{\infty\}$ be a proper,  lower semi-continuous and bounded
below function. Then,  for each $x_0\in X$ and every family $\{\delta_\lambda\}_{\lambda\in\Lambda}$ of positive real numbers such that
$\varphi_\lambda(x_0)\leq\inf\varphi_\lambda(X)+\delta_\lambda$, there exists $x^*\in X$ satisfying the following three conditions:
\begin{description}
  \item[(\ref{t17}.1)] for any $\lambda\in\Lambda$, $\varphi_\lambda(x^*)\leq\varphi_\lambda(x_0)$;
  \item[(\ref{t17}.2)] for any $\lambda\in\Lambda$, $d_\lambda(x_0,x^*)\leq\delta_\lambda$; and
  \item[(\ref{t17}.3)] for each $x\in X$ with $x\neq x^*$, there exists $\mu\in \Lambda$ satisfying $\varphi_\mu(x^*)<\varphi_\mu(x)+d_\mu(x,x^*)$.
\end{description}
\end{theorem}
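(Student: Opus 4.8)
The plan is to obtain Theorem~\ref{t17} by applying Theorem~\ref{t16} to a \emph{restricted} collection of functions, namely the restrictions of the $\varphi_\lambda$ to the closed set on which they are finite and which satisfies the $\delta_\lambda$-bounds, thereby reducing the variational principle to the already-proved Bishop--Phelps-type statement. The key observation is that condition (\ref{t16}.1) of Theorem~\ref{t16} says precisely $x_0\preceq x^*$ in the order $\preceq$ of Lemma~\ref{le10}, and that inequality already encodes both (\ref{t17}.1) and (\ref{t17}.2): indeed, from $\varphi_\lambda(x^*)+d_\lambda(x_0,x^*)\leq\varphi_\lambda(x_0)$ and the fact that $d_\lambda\geq0$ we get $\varphi_\lambda(x^*)\leq\varphi_\lambda(x_0)$, which is (\ref{t17}.1); and combining this with the hypothesis $\varphi_\lambda(x_0)\leq\inf\varphi_\lambda(X)+\delta_\lambda$ together with $\inf\varphi_\lambda(X)\leq\varphi_\lambda(x^*)$ gives $d_\lambda(x_0,x^*)\leq\varphi_\lambda(x_0)-\varphi_\lambda(x^*)\leq\delta_\lambda$, which is (\ref{t17}.2). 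Finally, condition (\ref{t16}.2) is verbatim condition (\ref{t17}.3).

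\textbf{Handling the extended-real values.} The one genuine wrinkle is that here the $\varphi_\lambda$ take values in $\rr\cup\{\infty\}$, whereas Lemma~\ref{le10} and Theorem~\ref{t16} were stated for real-valued functions. To bridge this, I would first fix $x_0\in X$ and pass to the closed set
$$
Y=\bigcap_{\lambda\in\Lambda}\{x\in X:\varphi_\lambda(x)\leq\varphi_\lambda(x_0)\},
$$
which contains $x_0$ and is closed because each $\varphi_\lambda$ is lower semi-continuous. On $Y$ every $\varphi_\lambda$ is finite (it is bounded above by $\varphi_\lambda(x_0)<\infty$ and bounded below by hypothesis), so the restrictions $\varphi_\lambda|_Y$ are genuine lower semi-continuous, bounded-below real-valued functions on the complete space $Y$ (a closed subset of the complete space $X$ is complete). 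Applying Theorem~\ref{t16} with base point $x_0$ inside $Y$ then produces the desired $x^*\in Y$.

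\textbf{Main obstacle.} The step requiring the most care is verifying that condition (\ref{t17}.3)—the maximality condition (\ref{t16}.2)—survives the passage back from $Y$ to all of $X$. Theorem~\ref{t16} only guarantees (\ref{t16}.2) for competitors $x\in Y$, so I must argue that any $x\in X\setminus\{x^*\}$ with $\varphi_\mu(x^*)\geq\varphi_\mu(x)+d_\mu(x,x^*)$ for \emph{all} $\mu$ would in fact lie in $Y$ and hence contradict maximality there. This follows because such an $x$ satisfies $\varphi_\mu(x)\leq\varphi_\mu(x^*)\leq\varphi_\mu(x_0)$ for every $\mu$ (using (\ref{t17}.1) already established), so $x\in Y$; then the assumed inequalities give $x^*\preceq x$ within $Y$, and maximality of $x^*$ forces $x=x^*$. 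With this closure argument in place the three conditions all follow, completing the proof.
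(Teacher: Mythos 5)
Your proposal is correct and follows essentially the same route as the paper's own proof: both pass to the closed, complete set $Y=\bigcap_{\lambda\in\Lambda}\{x\in X:\varphi_\lambda(x)\leq\varphi_\lambda(x_0)\}$, apply Theorem \ref{t16} to the restricted real-valued functions $\varphi_\lambda|_Y$, read off (\ref{t17}.1) and (\ref{t17}.2) from the resulting inequality together with the hypothesis $\varphi_\lambda(x_0)\leq\inf\varphi_\lambda(X)+\delta_\lambda$, and then extend (\ref{t17}.3) from $Y$ to all of $X$. The only cosmetic difference is that you prove (\ref{t17}.3) for $x\notin Y$ by contradiction (its failure would force $x\in Y$ and violate the maximality statement (\ref{t16}.2)), whereas the paper argues directly that $x\notin Y$ supplies an index $\mu$ with $\varphi_\mu(x_0)<\varphi_\mu(x)$; these are contrapositives of the same argument.
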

\begin{proof}
Let $x_0\in X$ and $\{\delta_\lambda\}_{\lambda\in\Lambda}$ be a family of positive  real numbers such that
$\varphi_\lambda(x_0)\leq\inf\varphi_\lambda(X)+\delta_\lambda$. Since each $\varphi_\lambda$ is proper, for each $\lambda\in\Lambda$,
$\inf\varphi_\lambda(X)<\infty$ and thus, $\varphi_\lambda(x_0)<\infty$. Let $Y=\bigcap_{\lambda\in\Lambda}\{x\in
X:\varphi_\lambda(x)\leq\varphi_\lambda(x_0)\}$ and $\widetilde{\varphi}_\lambda:Y\to\rr$ the restriction of $\varphi_\lambda$ to $Y$. Notice $Y$ is
closed in $X$ and consequently $Y$ is complete. Hence, we can apply Theorem \ref{t16} to the family
$\{\widetilde{\varphi}_\lambda\}_{\lambda\in\Lambda}$ and, since $x_0\in Y$, there exists $x^*\in Y$ such that for any $\lambda\in\Lambda$,
\begin{equation}\label{e1}
\widetilde{\varphi}_\lambda(x^*)+d_\lambda(x_0,x^*)\leq\widetilde{\varphi}_\lambda(x_0).
\end{equation}
Moreover, for each $x\in Y$ with $x\neq x^*$, there exists $\mu\in\Lambda$ such that
\begin{equation}\label{e2}
\widetilde{\varphi}_\mu(x^*)<\widetilde{\varphi}_\mu(x)+d_\mu(x,x^*).
\end{equation}
Since $x^*\in Y$, condition (\ref{t17}.1) holds. From (\ref{e1}) and the
assumption, for each $\lambda\in\Lambda$, we have $$ \widetilde{\varphi}_\lambda(x^*)+d_\lambda(x_0,x^*)\leq\widetilde{\varphi}_\lambda(x_0)\leq
\inf\varphi_\lambda(X)+\delta_\lambda \leq \varphi_\lambda(x^*)+\delta_\lambda, $$ which implies condition (\ref{t17}.2). Let us prove condition (\ref{t17}.3). From
(\ref{e2}) this condition  holds for all $x\in Y$. Let $x\in X\setminus Y$. Hence, there exists $\mu\in\Lambda$ such that
$\varphi_\mu(x_0)<\varphi_\mu(x)\leq \varphi_\mu(x)+d_\mu(x,x^*)$ and since $\varphi_\mu(x^*)\leq\varphi_\mu(x_0)$, we have $
\varphi_\mu(x^*)<\varphi_\mu(x)+d_\mu(x,x^*)$. Therefore, for each $x\in X$, $\varphi_\mu(x^*)<\varphi_\mu(x)+d_\mu(x,x^*)$, which  concludes the
proof.
\end{proof}

The following corollary is useful to obtain some fixed point results under some inwardness  conditions.

\begin{corollary}\label{c18}  Let $K$ be a closed subset of $X$, $f:K\to K$ an arbitrary function and $T:K\to\C(X)$ a multi-function.  Suppose  the following two conditions hold:
\begin{description}
\item[(\ref{c18}.1)] $T$ is weakly upper semi-continuous, and
\item[(\ref{c18}.2)] for each $\lambda\in\Lambda$ and $x\in X$, $d_\lambda(f(x),Tf(x))\leq d_\lambda(x,Tx) +r_\lambda d_\lambda(x,f(x))$, whenever $r_\lambda<0$.
\end{description}
Then, there exists $x^*\in X$ such that $f(x^*)=x^*$.
\end{corollary}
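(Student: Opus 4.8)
The plan is to reduce the statement to the single-valued Caristi theorem already established as Corollary \ref{c12}, applied to the complete space $K$, by manufacturing from the data of (\ref{c18}.2) an appropriate family of Lyapunov-type functions. Here I read the family $\{r_\lambda\}_{\lambda\in\Lambda}$ as consisting of negative constants and the inequality in (\ref{c18}.2) as holding for all $x\in K$ (so that $f(x)\in K$ and $Tf(x)$ are defined). For each $\lambda\in\Lambda$ define $\varphi_\lambda:K\to\rr$ by
$$
\varphi_\lambda(x)=\frac{-1}{r_\lambda}\,d_\lambda(x,Tx),
$$
which is legitimate since $r_\lambda<0$ forces $-1/r_\lambda>0$, and $d_\lambda(x,Tx)=\inf_{y\in Tx}d_\lambda(x,y)$ is a finite nonnegative real because $Tx\in\C(X)$ is nonempty. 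In particular $\varphi_\lambda\geq0$, so each $\varphi_\lambda$ is automatically bounded below.

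Next I would turn (\ref{c18}.2) into the Caristi descent inequality. Writing $-r_\lambda>0$ and rearranging $d_\lambda(f(x),Tf(x))\leq d_\lambda(x,Tx)+r_\lambda d_\lambda(x,f(x))$ gives
$$
(-r_\lambda)\,d_\lambda(x,f(x))\leq d_\lambda(x,Tx)-d_\lambda(f(x),Tf(x)),
$$
and dividing by $-r_\lambda>0$ yields exactly
$$
d_\lambda(x,f(x))\leq\varphi_\lambda(x)-\varphi_\lambda(f(x))\qquad(\lambda\in\Lambda,\ x\in K),
$$
which is precisely the contraction-type hypothesis demanded by Corollary \ref{c12}.

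The remaining hypothesis to verify is the lower semi-continuity of each $\varphi_\lambda$, and this is exactly where condition (\ref{c18}.1) enters. A real function $g$ on $K$ is lower semi-continuous precisely when $\{x:g(x)>\alpha\}$ is open for every $\alpha$; for $g=d_\lambda(\cdot,T(\cdot))\geq0$ this need only be checked for $\alpha\geq0$ (for $\alpha<0$ the set is all of $K$), and that is verbatim the weak upper semi-continuity of $T$. Since multiplication by the positive constant $-1/r_\lambda$ preserves lower semi-continuity, each $\varphi_\lambda$ is lower semi-continuous on $K$.

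The one point that requires care, and which I expect to be the only real obstacle, is that Corollary \ref{c12} is phrased for a map of the whole complete space into itself, whereas here $f:K\to K$. Because $X$ is complete and $K$ is closed, $K$ is itself a complete uniform space under the restricted uniformity, whose topology is generated by $\{d_\lambda|_K\}_{\lambda\in\Lambda}$ (still saturated, i.e.\ satisfying (S)); hence Lemma \ref{le10} and Corollary \ref{c12} apply verbatim with $X$ replaced by $K$. Applying Corollary \ref{c12} to $f:K\to K$ with the family $\{\varphi_\lambda\}_{\lambda\in\Lambda}$ then yields $x^*\in K\subseteq X$ with $f(x^*)=x^*$, which completes the argument. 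Everything apart from this passage to the closed subspace is a direct rearrangement together with the identification of weak upper semi-continuity with lower semi-continuity of $x\mapsto d_\lambda(x,Tx)$.
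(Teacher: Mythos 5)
Your proposal is correct and follows essentially the same route as the paper's own proof: defining $\varphi_\lambda(x)=-d_\lambda(x,Tx)/r_\lambda$, deducing its lower semi-continuity and boundedness below from (\ref{c18}.1), rearranging (\ref{c18}.2) into the Caristi inequality, and applying Corollary \ref{c12} on the complete subspace $K$. The only difference is that you spell out details the paper leaves implicit (the equivalence of weak upper semi-continuity with lower semi-continuity of $x\mapsto d_\lambda(x,Tx)$, and the legitimacy of applying Corollary \ref{c12} to the closed subspace $K$), which is a welcome clarification rather than a deviation.
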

\begin{proof}
Since $K$ is a closed subset of $X$ and $X$ is complete, $K$ is
 complete. For each  $\lambda\in\Lambda$, let $\varphi_\lambda:K\to\rr$ such that $\varphi_\lambda(x)=-d_\lambda(x,Tx)/r_\lambda$. From
 (\ref{c18}.1), $\varphi_\lambda$ is lower semi-continuous and bounded below, and from (\ref{c18}.2) we have, $ d_\lambda(x,f(x))\leq
 \varphi_\lambda(x)-\varphi_\lambda(f(x))$. Therefore, from Corollary \ref{c12}, there exists $x^*\in X$ such that $f(x^*)=x^*$  and the proof is
 complete.
\end{proof}

\section{Applications to locally convex spaces}
In this section, $X$ denotes a real or complex locally convex space endowed with a family $\{\n{\cdot}_\lambda\}_{\lambda\in\Lambda}$ of semi norms defining the topology of $X$ and for each $Y\subseteq X$, $\Q(Y)$ stands for the family of all nonempty, compact, convex subsets of $X$. Here, for each $\lambda\in\Lambda$, $d_\lambda$ is defined as $d_\lambda(x,y)=\n{x-y}_\lambda$, for $(x,y)\in X\times X$.

\begin{lemma}\label{l20}
Let $A$ and $B$ be two subsets of $X$ and $\beta$ be a scalar. Then
\begin{itemize}
  \item [(i)] $\alpha(A+B)\leq\alpha(A)+\alpha(B)$.
  \item [(ii)] $\alpha(\beta A)=|\beta|\alpha(A)$.
  \item [(iii)] $\alpha(\mathrm{co}(A))=\alpha(A)$.
\end{itemize}
\end{lemma}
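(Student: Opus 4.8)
The plan is to establish each of the three statements by working directly with the definition of $\alpha$ via the families $\C(\epsilon)$, exploiting the fact that in a locally convex space each $d_\lambda$ is translation-invariant and positively homogeneous, i.e.\ $d_\lambda(x+z,y+z)=\n{x-y}_\lambda$ and $d_\lambda(\beta x,\beta y)=|\beta|\,\n{x-y}_\lambda$. These two algebraic properties of the generating seminorms are what distinguish the present vector setting from the general uniform space, and they are exactly what make the covering balls $\mathrm{B}_\lambda(x,\epsilon)$ behave well under addition, scaling and convex combination.

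For (i), I would fix $\lambda\in\Lambda$ and $\epsilon>0$, and choose finite covers $A\subseteq\bigcup_{i}\mathrm{B}_\lambda(a_i,\alpha(A)/2+\epsilon)$ and $B\subseteq\bigcup_{j}\mathrm{B}_\lambda(b_j,\alpha(B)/2+\epsilon)$. Then, using translation invariance of $\n{\cdot}_\lambda$, every point $a+b$ with $a\in\mathrm{B}_\lambda(a_i,\cdot)$ and $b\in\mathrm{B}_\lambda(b_j,\cdot)$ lies in $\mathrm{B}_\lambda(a_i+b_j,\alpha(A)/2+\alpha(B)/2+2\epsilon)$, giving a finite cover of $A+B$ by the centers $a_i+b_j$; letting $\epsilon\to0$ yields $\alpha(A+B)\le\alpha(A)+\alpha(B)$. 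For (ii), the case $\beta=0$ is trivial, and for $\beta\neq0$ positive homogeneity shows that $A\subseteq\bigcup_i\mathrm{B}_\lambda(a_i,\delta)$ holds if and only if $\beta A\subseteq\bigcup_i\mathrm{B}_\lambda(\beta a_i,|\beta|\delta)$, so the admissible covering radii scale exactly by $|\beta|$, whence $\alpha(\beta A)=|\beta|\alpha(A)$.

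The main work, and the expected obstacle, is (iii). The inequality $\alpha(A)\le\alpha(\mathrm{co}(A))$ is immediate from monotonicity (property a)) since $A\subseteq\mathrm{co}(A)$, so the content is the reverse inequality $\alpha(\mathrm{co}(A))\le\alpha(A)$. The plan is to fix $\lambda$ and $\epsilon>0$, take a finite cover of $A$ by balls centered at $a_1,\dots,a_r$ of radius $\alpha(A)/2+\epsilon$, and observe that $\mathrm{co}(A)\subseteq\mathrm{co}(\{a_1,\dots,a_r\})+\mathrm{B}_\lambda(0,\alpha(A)/2+\epsilon)$, using convexity of each ball and translation invariance. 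The finite-dimensional simplex $S=\mathrm{co}(\{a_1,\dots,a_r\})$ is compact, hence precompact, so $\alpha(S)=0$ and $S$ can itself be covered by finitely many balls of radius $\epsilon$. Combining the two covers via part (i) (or a direct triangle-inequality argument) gives a finite cover of $\mathrm{co}(A)$ by balls of radius $\alpha(A)/2+2\epsilon$ about finitely many centers, so that $\alpha(\mathrm{co}(A))\le\alpha(A)+2\epsilon$ for every $\epsilon>0$.

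The delicate point to handle carefully is the inclusion $\mathrm{co}(A)\subseteq S+\mathrm{B}_\lambda(0,\alpha(A)/2+\epsilon)$: a convex combination $\sum_k t_k x_k$ of points $x_k\in A$ with $x_k\in\mathrm{B}_\lambda(a_{i(k)},\alpha(A)/2+\epsilon)$ must be compared with the corresponding convex combination $\sum_k t_k a_{i(k)}\in S$, and here the triangle inequality together with the subadditivity and absolute homogeneity of the seminorm $\n{\cdot}_\lambda$ gives $\n{\sum_k t_k(x_k-a_{i(k)})}_\lambda\le\sum_k t_k\n{x_k-a_{i(k)}}_\lambda<\alpha(A)/2+\epsilon$, which is precisely the seminorm-convexity estimate that the vector structure supplies and that would fail in a bare uniform space. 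Once this inclusion is in hand, letting $\epsilon\to0$ completes the proof of (iii).
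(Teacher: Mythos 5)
Your proof is correct, and for (i) and (ii) it does exactly what the paper does (the paper dismisses both as ``easy to verify'' and proves only (iii)). For (iii) you share the paper's overall strategy --- cover $A$ by finitely many $\lambda$-balls, replace $\mathrm{co}(A)$ by the convex hull of the centers fattened by a $\lambda$-ball, then produce a finite net for that polytope --- but the execution is genuinely different at both steps. The paper first proves the two-ball identity $\mathrm{co}(\mathrm{B}_\lambda(x_1,\rho)\cup\mathrm{B}_\lambda(x_2,\rho))=\{tx_1+(1-t)x_2:0\leq t\leq1\}+\mathrm{B}_\lambda(0,\rho)$, extends it to $r$ balls by induction via the nesting of convex hulls, and builds the finite net by hand, partitioning each segment so that consecutive nodes are $\eta$-close in $\n{\cdot}_\lambda$. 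You instead obtain the inclusion $\mathrm{co}(A)\subseteq S+\mathrm{B}_\lambda(0,\alpha(A)/2+\epsilon)$, with $S=\mathrm{co}(\{a_1,\dots,a_r\})$, in one stroke from subadditivity and homogeneity of the seminorm, and then dispose of $S$ by observing that it is compact (continuous image of the standard simplex), hence precompact, hence admits a finite $\lambda$-net of radius $\epsilon$. Your route is shorter and is the classical Darbo-style argument; the paper's is more self-contained, constructing the net explicitly rather than invoking compactness. Your bookkeeping of the factor $1/2$ in the definition of $\C(\epsilon)$ is also cleaner: read literally, the paper covers $A$ by balls of radius $\alpha(A)+\eta$ instead of $\alpha(A)/2+\eta$, so its displayed estimates only yield $\alpha(\mathrm{co}(A))\leq 2\alpha(A)$; the repair is precisely the radius you chose.

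One caution: the parenthetical suggestion to combine the two covers ``via part (i)'' does not work, and you should keep only the direct triangle-inequality argument you offer as the alternative. The set $\mathrm{B}_\lambda(0,\rho)$ is a ball for the single seminorm $\n{\cdot}_\lambda$; with respect to the other seminorms it is in general unbounded (it contains the null space of $\n{\cdot}_\lambda$), so it need not belong to $\mathcal{B}(X)$ and there is no useful bound on $\alpha\bigl(S+\mathrm{B}_\lambda(0,\rho)\bigr)$ of the form $\alpha(S)+\alpha\bigl(\mathrm{B}_\lambda(0,\rho)\bigr)$. Since $\alpha$ requires a finite cover for \emph{every} seminorm, membership of $\mathrm{co}(A)$ in the relevant class $\C(\cdot)$ must be certified one seminorm at a time, with centers allowed to depend on $\lambda$ --- which is exactly what the inclusion $S+\mathrm{B}_\lambda(0,\rho)\subseteq\bigcup_j\mathrm{B}_\lambda(s_j,\rho+\epsilon)$ accomplishes. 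As you supplied that argument yourself, this is a blemish in the write-up, not a gap in the proof.
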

\begin{proof}
Conditions (i) and (ii) are easy to verify and we only prove condition (iii). We need to prove that $\alpha(co(A))\leq\alpha(A)$. Let $\epsilon=\alpha(A)$, $\lambda\in\Lambda$, $\eta>0$ and $x_1,\dots,x_r\in X$ such that $A\subseteq \mathrm{B}_\lambda(x_1,\epsilon+\eta)\cup\cdots\cup\mathrm{B}_\lambda(x_r,\epsilon+\eta)$. We have
$$
\mathrm{co}(\mathrm{B}_\lambda(x_1,\epsilon+\eta)\cup\mathrm{B}_\lambda(x_2,\epsilon+\eta))=\{tx_1+(1-t)x_2:0\leq t\leq1\}+\mathrm{B}_\lambda(0,\epsilon+\eta)
$$
and there exists $0=t_0<t_1\cdots<t_s=1$ a partition of $[0,1]$ such that $(t_i-t_{i-1})\n{x_1-x_2}_\lambda<\eta$. Hence
\begin{equation}\label{e3}
    \mathrm{co}(\mathrm{B}_\lambda(x_1,\epsilon+\eta)\cup\mathrm{B}_\lambda(x_2,\epsilon+\eta))\subseteq \bigcup_{i=1}^s\{t_ix_1+(1-t_i)x_2:0\leq t\leq1\}+\mathrm{B}_\lambda(0,\epsilon+2\eta)
\end{equation}
and since
$$
\mathrm{co}(\mathrm{B}_\lambda(x_1,\epsilon+\eta)\cup\cdots\cup\mathrm{B}_\lambda(x_r,\epsilon+\eta))\subseteq
\mathrm{co}\{\mathrm{B}_\lambda(x_1,\epsilon+\eta)\cup\mathrm{co}(\mathrm{B}_\lambda(x_2,\epsilon+\eta)\cup\cdots\cup\mathrm{B}_\lambda(x_r,\epsilon+\eta))\},
$$
it follows from induction and \eqref{e3} that there exist a finite subset $F$ of $X$ such that
$$
\mathrm{co}(\mathrm{B}_\lambda(x_1,\epsilon+\eta)\cup\cdots\cup\mathrm{B}_\lambda(x_r,\epsilon+\eta))\subseteq F+\mathrm{B}_\lambda(0,\epsilon+2\eta).
$$
Consequently, $\alpha(\mathrm{co}(A))\leq\alpha(F+\mathrm{B}_\lambda(0,\epsilon+2\eta))\leq \epsilon+2\eta$, which completes the proof due to $\eta>0$ is arbitrary.
\end{proof}
\begin{lemma}\label{l19}
Let $C\in\Q(X)$ and $T:C\to\Q(C)$ be a continuous multi-function. Then, $T$ has a fixed point.
\end{lemma}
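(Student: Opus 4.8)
The plan is to prove the multivalued Kakutani--Fan--Glicksberg theorem by the classical finite--dimensional approximation scheme, taking Brouwer's theorem \cite{Br12} as the only external input and exploiting upper semi--continuity together with the compact convex values to pass to the limit. (Here I read $\Q(C)$ as the nonempty compact convex subsets of $C$, so $T$ is a self--map of $C$.)

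First I would record the properties of $T$ to be used. Since $\Q(C)\subseteq\K(X)$, Theorem \ref{t2} shows that the continuity of $T$ with respect to the $\H$-topology is equivalent to $T$ being simultaneously lower and upper semi--continuous; in particular $T$ is upper semi--continuous with nonempty compact convex values. Only upper semi--continuity and the convexity of the values will actually be needed. I will use upper semi--continuity in the equivalent form that $\{x\in C:Tx\subseteq G\}$ is open for every open $G\subseteq X$.

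Next, the construction of approximate fixed points. Fix a convex symmetric open neighborhood $V$ of $0$ in $X$; these form a base at $0$ directed by reverse inclusion. For each $x\in C$ choose an open $U_x$ with $x\in U_x\subseteq x+V$. By compactness of $C$, finitely many $U_{x_1},\dots,U_{x_n}$ cover $C$; take a continuous partition of unity $\{p_i\}_{i=1}^n$ subordinate to this cover and pick $y_i\in Tx_i\subseteq C$. Define $f_V(x)=\sum_{i=1}^n p_i(x)y_i$, a continuous map of $C$ into the finite--dimensional compact convex set $S=\co\{y_1,\dots,y_n\}\subseteq C$. Applying Brouwer's theorem \cite{Br12} to $f_V|_S:S\to S$ yields $x_V\in S\subseteq C$ with $f_V(x_V)=x_V$. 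Writing $I=\{i:p_i(x_V)>0\}$, subordination gives $x_V\in U_{x_i}\subseteq x_i+V$, hence $x_i\in x_V+V$ (symmetry), for every $i\in I$, while $x_V=\sum_{i\in I}p_i(x_V)y_i$ with $y_i\in Tx_i$.

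Finally I would pass to the limit. By compactness of $C$ the net $(x_V)_V$ has a subnet converging to some $x^*\in C$. Fix a convex symmetric neighborhood $W$ of $0$ and choose a symmetric $W_1$ with $W_1+W_1\subseteq W$. As $G=Tx^*+\mathrm{int}(W_1)$ is open and contains the compact set $Tx^*$, upper semi--continuity gives a neighborhood $N=x^*+W_2$ of $x^*$ with $T(N\cap C)\subseteq Tx^*+W_1$. Going far enough along the subnet I may assume simultaneously $V\subseteq W_2'$, $x_V\in x^*+W_2'$ and $x^*-x_V\in W_1$, where $W_2'$ is symmetric with $W_2'+W_2'\subseteq W_2$; then $x_i\in x_V+V\subseteq x^*+W_2'+W_2'\subseteq N$, so $y_i\in Tx_i\subseteq Tx^*+W_1$ for every $i\in I$. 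Since $Tx^*+W_1$ is convex, the convex combination $x_V=\sum_{i\in I}p_i(x_V)y_i$ lies in $Tx^*+W_1$, whence $x^*=x_V+(x^*-x_V)\in Tx^*+W_1+W_1\subseteq Tx^*+W$. As $W$ is arbitrary and $Tx^*$ is closed, $x^*\in\bigcap_W(Tx^*+W)=Tx^*$, so $x^*$ is a fixed point. I expect the delicate point to be exactly this last paragraph: coordinating the three smallness requirements ($V\subseteq W_2'$, $x_V\in x^*+W_2'$, and $x^*-x_V\in W_1$) along the subnet so that all the points $x_i$ feeding the convex combination land in the region where $T$ is controlled by upper semi--continuity; the partition--of--unity construction and the Brouwer step are routine by comparison.
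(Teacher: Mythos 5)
Your proof is correct in substance, but it takes a genuinely different route from the paper: the paper's entire proof of Lemma \ref{l19} is a one-line appeal to Theorem 2 of Fan \cite{Fa61}, i.e.\ it outsources exactly the Kakutani--Fan--Glicksberg theorem that you prove from scratch. What your argument buys is self-containedness (Brouwer \cite{Br12} is the only external input) and a sharper hypothesis: you only use upper semi-continuity together with compact convex values, whereas the lemma as stated assumes full continuity (equivalently, by Theorem \ref{t2}, $\H$-continuity); this also matches the true strength of Fan's theorem. The cost is length, plus two points you should make explicit. First, in the limit step you need $Tx^*+W_1$ to be convex, so $W_1$ must be chosen convex as well as symmetric; this is harmless in a locally convex space (take $W_1=\frac{1}{2}W$, which satisfies $W_1+W_1\subseteq W$ by convexity of $W$), but as written ``choose a symmetric $W_1$'' does not justify the later sentence ``Since $Tx^*+W_1$ is convex''. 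Second, your Brouwer step tacitly uses that $S=\co\{y_1,\dots,y_n\}$ lies in a finite-dimensional affine subspace linearly homeomorphic to some $\rr^k$, and the partition-of-unity step uses normality of the compact set $C$; both require $X$ to be Hausdorff, an assumption the paper leaves implicit (it is likewise needed for Fan's theorem). With those two repairs, your approximate-fixed-point net argument --- including the coordination of the three smallness conditions along the subnet, which you correctly identify as the delicate point --- goes through.
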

\begin{proof}
It directly follows from Theorem 2 by Fan in \cite{Fa61}.
\end{proof}

The following result extends, to locally convex spaces and for continuous multi-functions, a known theorem by Darbo in \cite{Da55}.
\begin{theorem}
Let $D$ be a bounded, closed and convex subset of $X$ and $T:D\to \Q(D)$ be a condensing multi-function. Then, $T$ has a fixed point.
\end{theorem}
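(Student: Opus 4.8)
The plan is to mimic the classical Darbo argument, constructing a decreasing sequence of closed convex sets whose non-compactness measure decreases to zero, thereby isolating a compact convex set on which Fan's fixed point theorem (Lemma \ref{l19}) applies. This is the standard way to convert a condensing/set-contraction hypothesis into a compactness statement suitable for a Schauder-type theorem, adapted here to the uniform (seminorm) setting via the measure $\alpha$ and the convexity properties established in Lemma \ref{l20}.

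Concretely, I would fix a point $x_0\in D$ and define inductively $D_0=D$ and $D_{n+1}=\overline{\mathrm{co}}(T(D_n)\cup\{x_0\})$. First I would check by induction that each $D_n$ is a nonempty, closed, convex subset of $D$ with $T(D_n)\subseteq D_n$ and $D_{n+1}\subseteq D_n$: since $T:D\to\Q(D)$ has values in $D$ and $D$ is closed and convex, $\overline{\mathrm{co}}(T(D_0)\cup\{x_0\})\subseteq D$, and the nesting follows because $T(D_{n+1})\subseteq T(D_n)$ whenever $D_{n+1}\subseteq D_n$. Next I would estimate the measure: using property b), Lemma \ref{l20}(iii), and property c), we have
\begin{equation*}
\alpha(D_{n+1})=\alpha\bigl(\overline{\mathrm{co}}(T(D_n)\cup\{x_0\})\bigr)=\alpha(T(D_n)\cup\{x_0\})=\alpha(T(D_n)).
\end{equation*}
If $\alpha(D_n)>0$ then the condensing hypothesis gives $\alpha(T(D_n))<\alpha(D_n)$, so $\{\alpha(D_n)\}$ is a nonincreasing sequence of nonnegative reals; I would then argue that its limit must be $0$ (otherwise the strict inequality is violated in the limit using weak continuity of $\alpha$ under the nested intersection, via property d)).

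Having $\inf_n\alpha(D_n)=0$, I would set $C=\bigcap_{n}D_n$ and invoke Theorem \ref{t5}: the family $\{D_n\}$ is a nested filter base in $\mathcal{CB}(X)$ with $\inf\alpha(D_n)=0$, so $C$ is compact and nonempty, and $C$ is convex as an intersection of convex sets. Since $T(D_n)\subseteq D_n$ for all $n$ and $T$ has some continuity, I would verify $T(C)\subseteq C$: for $x\in C$ we have $x\in D_n$ for all $n$, hence $Tx\subseteq T(D_n)\subseteq D_{n+1}$ for all $n$, giving $Tx\subseteq C$, and $Tx\in\Q(C)$ because $Tx$ is compact and convex and contained in $C$. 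Finally, $T|_C:C\to\Q(C)$ is a continuous multi-function on the compact convex set $C\in\Q(X)$, so Lemma \ref{l19} furnishes a fixed point.

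The main obstacle I anticipate is the passage to the limit for the measures, namely showing that a nonincreasing sequence with $\alpha(D_{n+1})<\alpha(D_n)$ whenever $\alpha(D_n)>0$ actually drives $\alpha(D_n)\to0$ rather than to some positive limit $\ell>0$. The strict inequality alone does not immediately preclude a positive limit; I would resolve this by considering the intersection $C_\ell=\bigcap_n D_n$ in the case $\ell>0$, applying property d) together with the nesting to show $\alpha(C_\ell)\le\ell$, and then deriving a contradiction from $T(C_\ell)\subseteq C_\ell$ and the condensing inequality applied to $C_\ell$ (which would force $\alpha(C_\ell)<\alpha(C_\ell)$). A secondary technical point is ensuring that $T(D_n)$ is bounded so that $\alpha$ is defined on it; this is guaranteed because $T(D_n)\subseteq D$ and $D$ is bounded, so all the sets in play lie in $\mathcal{B}(X)$.
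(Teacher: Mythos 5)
Your construction, your measure computation $\alpha(D_{n+1})=\alpha(T(D_n))$ (via properties b), c) and Lemma \ref{l20}(iii)), and your endgame via Lemma \ref{l19} are all fine; the genuine gap is exactly the obstacle you flagged, and your proposed rescue does not close it. For a condensing map (as opposed to a $k$-set contraction, where $\alpha(D_n)\leq k^n\alpha(D)\to 0$ and your iteration works verbatim), the nonincreasing sequence $\alpha(D_n)$ may converge to a positive limit $\ell$, and nothing you write rules this out. From $T(C_\ell)\subseteq C_\ell$ and the condensing hypothesis you only obtain $\alpha(T(C_\ell))<\alpha(C_\ell)$ when $\alpha(C_\ell)>0$, which is not a contradiction: an invariant set whose image has strictly smaller measure is perfectly consistent. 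To force the absurdity $\alpha(C_\ell)<\alpha(C_\ell)$ you would need the reverse estimate $\alpha(C_\ell)\leq\alpha(T(C_\ell))$, and that requires an inclusion of the form $C_\ell\subseteq\overline{\mathrm{co}}(T(C_\ell)\cup\{x_0\})$. The nesting gives only the opposite inclusion $\overline{\mathrm{co}}(T(C_\ell)\cup\{x_0\})\subseteq C_\ell$; moreover monotonicity gives only $\alpha(C_\ell)\leq\ell$, not $\alpha(C_\ell)=\ell$ (the intersection of a nested sequence can have far smaller measure than the limit of the measures). Equivalently: your countable iteration may strictly decrease at every finite stage and still fail to reach measure zero; repairing this by continuing the construction through $C_\ell$, $\overline{\mathrm{co}}(T(C_\ell)\cup\{x_0\})$, \dots\ amounts to a transfinite induction that your write-up does not carry out.

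The missing idea, and the one the paper uses, is Sadovskii's minimal invariant set trick, which replaces the iteration by a single intersection over \emph{all} invariant sets. Let $\Sigma=\{K\in\mathcal{CB}(X):x_0\in K\subseteq D,\ K\ \mbox{convex},\ T(K)\subseteq K\}$, put $B=\bigcap_{K\in\Sigma}K$ and $C=\overline{\mathrm{co}}(\{x_0\}\cup T(B))$. One checks $C\subseteq B$, then that $C\in\Sigma$, whence $B\subseteq C$ by minimality, so $B=C$; that is, $B$ satisfies the \emph{equality} $B=\overline{\mathrm{co}}(\{x_0\}\cup T(B))$, not merely invariance. This equality yields $\alpha(B)=\alpha(T(B))$ by the same measure computation you already performed, and the condensing hypothesis then forces $\alpha(B)=0$; Fan's theorem (Lemma \ref{l19}) applied on the compact convex set $B$ finishes the proof exactly as in your final step. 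In short, what your approach lacks is any set satisfying the equality $K=\overline{\mathrm{co}}(\{x_0\}\cup T(K))$ --- your $D_n$ and $C_\ell$ satisfy only one-sided inclusions --- and it is precisely this equality that converts ``condensing'' into ``measure zero.''
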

\begin{proof}
Let $x_0\in X$ and $\Sigma=\{K\in\mathcal{CB}(X):x_0\in K\subset D, K\mbox{ convex and } T(K)\subseteq K\}$. Due to $D\in \Sigma$, we have $\Sigma\neq\oo$. Let $B=\bigcap_{K\in\Sigma}K$ and $C=\overline{\mathrm{co}}(\{x_0\}\cup T(B))$. We have $T(B)\subseteq\bigcap_{K\in\Sigma}T(K)\subseteq B$ and $x_0\in B$. Moreover, since $B$ is closed and convex, $C\subseteq B$. Thus $T(C)\subseteq T(B)\subseteq  C$, $C\in\Sigma$ and $B=C$. From property c) and (iii) in Lemma \ref{l20}, we obtain $\alpha(C)=\alpha(T(B))=\alpha(T(C))$  and since $T$ is condensing, we have $\alpha(C)=0$. Since $C\in\Q(X)$ and $T(C)\in\Q(C)$, it follows from Lemma \ref{l19} that $T$ has a fixed point, which completes the proof.
\end{proof}

For each $K$ subset of $X$ and $x\in K$, the inner set of $K$ at $x$ is defined as
$$
I_K(x)=x+\{c(y-x):y\in K,c\geq1\}.
$$
Also we define  the envelope of $I_K(x)$ as
$$
\widetilde{I}_K(x)=\bigcup_{z\in I_K(x)}\bigcap_{\lambda\in\Lambda,\eta>0}\{t\in X:\n{t-z}_\lambda\leq\eta\n{t-x}_\lambda\}.
$$
Let $T:K\to \C(X)$ be a function. We say $T$ is inward (respectively, weakly inward), if for each $x\in K$, $T(x)\in I_K(x)$ (respectively, $T(x)\in \widetilde{I}_K(x)$).

Theorem \ref{tt} below is an extension to locally convex spaces of an result by Mart\'inez-Y\'a\~{n}ez in \cite{Ma91}. See also \cite{Re72}.

\begin{theorem}\label{tt}
Let $\F=\{k_\lambda;\lambda\in\Lambda\}$ be a family of nonnegative constants  and $T:K\subseteq X\to X$ be a weakly inward $\F$-contractive function. Then, $T$ has a fixed point.
\end{theorem}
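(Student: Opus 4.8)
The plan is to reduce the statement to the Caristi-type machinery of Section~5. Since $T$ is $\F$-contractive (so $0\le k_\lambda<1$ for every $\lambda$), it is Lipschitz, hence continuous, and therefore each map $x\mapsto d_\lambda(x,Tx)$ is continuous on $K$. I set $\varphi_\lambda(x)=d_\lambda(x,Tx)/(1-k_\lambda)$, which is then lower semi-continuous and bounded below by $0$. Taking $K$ to be complete (for instance closed in a complete $X$), I apply Lemma~\ref{le10} to $K$ and the family $\{\varphi_\lambda\}_{\lambda\in\Lambda}$ to obtain a maximal element $x^*\in K$ for the order $\preceq$ defined there. The whole argument then comes down to showing that maximality forces $Tx^*=x^*$.

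To see this, I would first treat the genuinely inward case as a model. Suppose $Tx^*\neq x^*$ and $Tx^*\in I_K(x^*)$, so $Tx^*=x^*+c(w-x^*)$ with $w\in K$ and $c\geq1$; then $w=x^*+c^{-1}(Tx^*-x^*)$ lies on the segment $[x^*,Tx^*]$ and $d_\lambda(x^*,w)=c^{-1}d_\lambda(x^*,Tx^*)$. Combining $\n{w-Tx^*}_\lambda=(1-c^{-1})\,d_\lambda(x^*,Tx^*)$ with the contraction estimate $d_\lambda(Tx^*,Tw)\leq k_\lambda d_\lambda(x^*,w)$, the triangle inequality yields $d_\lambda(w,Tw)\leq[\,1-c^{-1}(1-k_\lambda)\,]\,d_\lambda(x^*,Tx^*)$, whence $\varphi_\lambda(x^*)-\varphi_\lambda(w)\geq c^{-1}d_\lambda(x^*,Tx^*)=d_\lambda(x^*,w)$ for every $\lambda$. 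Thus $x^*\preceq w$, and $w\neq x^*$ because $Tx^*\neq x^*$ forces $d_\mu(x^*,w)>0$ for some $\mu$; this contradicts the maximality of $x^*$.

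To cover the weakly inward hypothesis I would run the same computation with $Tx^*$ replaced by an approximating point supplied by the envelope. Given $\eta>0$, membership $Tx^*\in\widetilde I_K(x^*)$ produces $z\in I_K(x^*)$ with $\n{Tx^*-z}_\lambda\leq\eta\,\n{Tx^*-x^*}_\lambda$; writing $z=x^*+c(w_\eta-x^*)$ with $w_\eta\in K$ and $c\geq1$, the estimates above acquire additive errors of order $\eta\,d_\lambda(x^*,Tx^*)$. The delicate feature is that the order relation $x^*\preceq y$ demands the Caristi inequality for every $\lambda\in\Lambda$ simultaneously, so I would invoke the saturation condition (S) to organize the approximants and then pass to the limit as $\eta\to0$, using the lower semi-continuity of the $\varphi_\lambda$ and the closedness of $K$ to recover a point $y\in K$ with $x^*\preceq y$.

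The main obstacle is precisely this limiting step for the envelope $\widetilde I_K(x^*)$: I must guarantee both that the approximants $w_\eta$ do not collapse onto $x^*$ as $\eta\to0$, so that the limit point $y$ is genuinely distinct from $x^*$, and that the inequality $d_\lambda(x^*,y)\leq\varphi_\lambda(x^*)-\varphi_\lambda(y)$ survives the passage to the limit for \emph{all} $\lambda$ at once rather than for one seminorm at a time. Once a strictly larger element $y\succ x^*$ is produced, it contradicts the maximality of $x^*$, forcing $Tx^*=x^*$ and completing the proof.
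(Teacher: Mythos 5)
Your reduction to the Caristi machinery is sound as far as it goes, and your treatment of the genuinely inward case is correct: with $\varphi_\lambda(x)=d_\lambda(x,Tx)/(1-k_\lambda)$, a maximal element $x^*$ of Lemma \ref{le10}, and $Tx^*=x^*+c(w-x^*)$, your estimate $d_\lambda(w,Tw)\leq\bigl[1-c^{-1}(1-k_\lambda)\bigr]d_\lambda(x^*,Tx^*)$ does yield $x^*\preceq w$ with $w\neq x^*$, contradicting maximality. But the theorem's hypothesis is \emph{weak} inwardness, and that case is exactly where your proof stops: you pose the $\eta\to0$ limiting argument as an unresolved ``main obstacle,'' and as sketched it would indeed fail --- the approximants $w_\eta$ can collapse onto $x^*$, and there is no mechanism to recover the inequality $d_\lambda(x^*,y)\leq\varphi_\lambda(x^*)-\varphi_\lambda(y)$ simultaneously for all $\lambda$ at a limit point. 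So the proposal proves a weaker statement (the inward case) and leaves the actual theorem open.

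The paper's proof shows that no limit is needed, and this is the idea you are missing. Fix once and for all $\epsilon_\lambda>0$ with $k_\lambda<(1-\epsilon_\lambda)/(1+\epsilon_\lambda)$; the definition of the envelope $\widetilde{I}_K(x)$ supplies a \emph{single} point $z=x+c(x)(f(x)-x)\in I_K(x)$, with $f(x)\in K$ and $c(x)\geq1$, satisfying $\n{T(x)-z}_\lambda\leq\epsilon_\lambda\n{T(x)-x}_\lambda$ for \emph{all} $\lambda$ simultaneously (so the uniformity over $\lambda$ that worries you is built into the definition, not something to be engineered with condition (S)). Running your same triangle-inequality computation with this fixed tolerance gives the pointwise estimate \eqref{e5},
$$
\n{f(x)-T(f(x))}_\lambda \leq \n{T(x)-x}_\lambda+\left(k_\lambda-\frac{1-\epsilon_\lambda}{1+\epsilon_\lambda}\right)\n{x-f(x)}_\lambda ,
$$
whose coefficient $r_\lambda=k_\lambda-\frac{1-\epsilon_\lambda}{1+\epsilon_\lambda}$ is strictly negative: the slack in the Caristi inequality absorbs the approximation error once and for all, instead of requiring the error to vanish. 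Corollary \ref{c18} (applied to the selection $f:K\to K$ at every point, not only at a maximal point) then produces $x^*$ with $f(x^*)=x^*$, and feeding this back into \eqref{e4} forces $\n{T(x^*)-x^*}_\lambda\leq\epsilon_\lambda\n{T(x^*)-x^*}_\lambda$, hence $T(x^*)=x^*$. In short: replace your ``approximate and pass to the limit'' scheme by ``fix a subcritical tolerance and reduce to the single-valued Caristi corollary''; that closes the gap.
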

\begin{proof}
For each $\lambda\in\Lambda$, choose $\epsilon_\lambda>0$ such that $k_\lambda<(1-\epsilon_\lambda)/(1+\epsilon_\lambda)$. Since $T$ is weakly inward, there exists $f:K\to K$ and $c:K\to\rr$ such that, for any $\lambda\in\Lambda$ and $x\in X$,
\begin{equation}\label{e4}
    \n{T(x)-x-c(x)(f(x)-x)}_\lambda\leq\epsilon_\lambda \n{T(x)-x}_\lambda
\end{equation}
with $c(x)\geq1$.  Let $h(x)=1/c(x)$, $w(x)=(1-h(x))x+h(x)T(x)$, fix $\lambda\in\Lambda$ and observe that
$$
\n{w(x)-x}_\lambda=h(x)\n{T(x)-x}_\lambda\quad\mbox{and}\quad\n{w(x)-f(x)}_\lambda\leq\epsilon_\lambda h(x)\n{T(x)-x}_\lambda.
$$
Hence
$
\n{x-f(x)}_\lambda\leq(1+\epsilon_\lambda)\n{w(x)-x}_\lambda
$
and accordingly
$$
\begin{array}{ccl}
  \n{f(x)-T(f(x))}_\lambda & \leq & \n{f(x)-w(x)}_\lambda+\n{w(x)-T(x)}_\lambda+\n{Tx-T(f(x))}_\lambda \\
   & \leq & \epsilon_\lambda h(x)\n{T(x)-x}_\lambda+(1-h(x))\n{T(x)-x}_\lambda +k_\lambda\n{x-f(x)} \\
   & = &  \n{T(x)-x}_\lambda +k_\lambda\n{x-f(x)}-(1-\epsilon_\lambda)\n{w(x)-x}_\lambda.\\
\end{array}
$$
Consequently,
\begin{equation}\label{e5}
    \n{f(x)-T(f(x))}_\lambda \leq  \n{T(x)-x}_\lambda+\left(k_\lambda -\frac{1-\epsilon_\lambda}{1+\epsilon_\lambda}\right)\n{x-f(x)}_\lambda,
\end{equation}
and thus, Corollary \ref{c18} implies that there exists $x^*\in X$ such that $f(x^*)=x^*$. Since $\epsilon_\lambda\leq1$, from \eqref{e4} we have, for any $\lambda\in\Lambda$, $\n{T(x^*)-x^*}_\lambda=0$. Therefore, $x^*$ is a fixed point of $T$ and the proof is complete.
\end{proof}

\paragraph{Acknowledgments} This work was partially supported by FONDECYT grant 1120879 from the Chilean government.

\end{document}